\documentclass[a4paper,12pt]{amsart}
\usepackage{graphicx,tabularx} 
\usepackage[left=2cm,top=2.5cm,right=2cm,bottom=2.5cm]{geometry}
\usepackage{amsmath}
\usepackage{amssymb}
\usepackage{amsfonts}
\usepackage{amsthm}
\usepackage{stmaryrd}
\usepackage{url}
\usepackage{hyperref}
\usepackage{fancyhdr}
\usepackage{mathrsfs}
\usepackage{mathtools}
\usepackage{color,colortbl}
\usepackage{enumerate,mdwlist}
\usepackage{tikz-cd} 
\usepackage[T1]{fontenc}


\usepackage{txfonts,pxfonts}

\usepackage{amstext}
\usepackage{array}

\usepackage[shortlabels]{enumitem}
\setlist[enumerate]{leftmargin=25pt}
\setlist[itemize]{leftmargin=25pt}

\newtheorem{thm}{Theorem}[section]
\newtheorem{lemma}[thm]{Lemma}
\newtheorem{prop}[thm]{Proposition}
\newtheorem{cor}[thm]{Corollary}
\newtheorem{conj}[thm]{Conjecture}

\theoremstyle{definition}

\newtheorem{claim}{Claim}

\theoremstyle{definition}
\newtheorem{defn}[thm]{Definition}


\newcommand{\di}{\ensuremath{\mathrm{d}}}


\newcommand{\bra}[1]{{\left({#1}\right)}}

\newcommand{\set}[1]{{\left\{{#1}\right\}}}

\DeclarePairedDelimiter\abs{\lvert}{\rvert}
\DeclarePairedDelimiter\norm{\lVert}{\rVert}


\newcommand{\al}{\ensuremath{\alpha}}

\newcommand{\de}{\ensuremath{\delta}}

\newcommand{\eps}{\ensuremath{\varepsilon}}

\newcommand{\La}{\ensuremath{\Lambda}}
\newcommand{\la}{\ensuremath{\lambda}}

\newcommand{\vphi}{\ensuremath{\varphi}}
\newcommand{\om}{\ensuremath{\omega}}
\newcommand{\Om}{\ensuremath{\Omega}}


\newcommand{\mc}[1]{\ensuremath{\mathcal{#1}}}

\newcommand{\ZZ}{\ensuremath{\mathbb{Z}}}

\newcommand{\QQ}{\ensuremath{\mathbb{Q}}}
\newcommand{\RR}{\ensuremath{\mathbb{R}}}
\newcommand{\NN}{\ensuremath{\mathbb{N}}}
\newcommand{\TT}{\ensuremath{\mathbb{T}}}


\newcommand{\CC}{\ensuremath{\mathbb{C}}}

\newcommand{\sm}{\ensuremath{\setminus}}

\newcommand{\vn}{\ensuremath{\varnothing}}

\usepackage[normalem]{ulem}

\newcommand{\out}[1]{}

\numberwithin{equation}{section}

\title{Linear Independence of Time-Frequency Translates for Ultimately Positive Functions}
\author{Romanos Diogenes Malikiosis and Nikos Poursalidis}

\begin{document}

\begin{abstract}
 We prove that the HRT conjecture \cite{HRT} holds when the Gabor system consists of a 4-point set in the time-frequency plane and a square-integrable function that is ultimately positive. We also prove the conjecture for Gabor systems generated by an ultimately positive function and translation sets, whose frequencies satisfy at most one linear dependence over $\mathbb{Z}$, improving a result of Benedetto and Bourouihiya \cite{infbehavior}.
\end{abstract}

\maketitle

\section{Introduction}

Let $f \in L^2(\mathbb{R})$ and $ \Lambda \subset \mathbb{R}^2  $ be a finite set of points. The \textit{Gabor system} generated by $f$ and $\Lambda$ is the set of time-frequency shifts:
$$ \mathcal{G}(f, \Lambda) = \{M_{\omega}T_{\tau}f : (\tau, \om) \in \Lambda \}$$
where $T_{\tau}f(t)=f(t-\tau)$ and $M_{\omega}f(t)=e^{2\pi i \omega t}f(t)$ are the time and frequency shifts, respectively.

 The following conjecture was formulated by Heil, Ramanathan, and Topiwala in 1996:
 \begin{conj}[\cite{HRT}]\label{conj:HRT}
     Let $f\in L^2(\RR)$ be a nonzero function and $\La\subset\RR^2$ be finite. Then, $\mc{G}(f,\La)$ is linearly independent.  
 \end{conj}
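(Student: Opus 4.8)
The plan is to exploit the large symmetry group of the problem and then reduce to increasingly arithmetic configurations of $\La$. The key preliminary observation is that linear independence of $\mc{G}(f,\La)$ is invariant under the metaplectic group: if $\mu(A)$ denotes the metaplectic operator attached to a symplectic matrix $A\in\SL_2(\RR)$, then $\mu(A)M_\om T_\tau \mu(A)^{-1}$ is again a time-frequency shift, up to a unimodular constant, with the point $(\tau,\om)$ replaced by $A(\tau,\om)^{T}$. Since $\mu(A)$ is unitary, $\mc{G}(f,\La)$ is linearly independent if and only if $\mc{G}(\mu(A)f,\, A\La)$ is. Combined with the trivial translation and modulation covariance, this lets me assume $0\in\La$ and normalize three further points into a standard position (say, a second point at $(1,0)$ and a third on the vertical axis), so that the problem is reduced to the \emph{relative} arithmetic of the remaining points of $\La$.

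The next step is to stratify according to the rational structure of $\La$. If $\La$ is contained in a translate of a full-rank lattice $\Gamma\subset\RR^2$, the conjecture is already known: an annihilating combination $\sum_{(\tau,\om)\in\La} c_{\tau,\om} M_\om T_\tau$ would be a nonzero element of the group algebra of the discrete Heisenberg group, whose image under the regular representation is injective by the absence of nontrivial zero divisors (a consequence of Linnell's work on the Atiyah conjecture), forcing $f=0$. I would try to push this operator-algebraic machinery into the regime where the shifts generate a group that is still amenable and torsion-free but no longer embedded in a lattice, by approximating the irrational shifts with rational ones and controlling the trace of the hypothetical annihilating operator as a limit along this approximation.

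The genuinely hard part — and the reason the conjecture remains open in full — is exactly the configuration in which the coordinates of the points of $\La$ are irrationally related, so that the time-frequency shifts generate a group that is dense in the Heisenberg group rather than discrete. Here no lattice, no finite-dimensional reduction, and no zero-divisor theorem is available, and the continuous spectrum of the relevant operators defeats the trace argument. I expect that clearing this case will require a genuinely analytic input beyond the algebraic framework: either a quantitative uncertainty principle forcing any nonzero annihilating combination to produce a function with support and spectrum too large to lie in $L^2(\RR)$, or a new structural constraint on $f$ extracted from the assumed dependence, in the spirit of the ultimately-positive hypothesis exploited earlier in this paper. Absent such an ingredient, the proposal can at best establish the conjecture in the lattice and near-lattice regimes, and it is the irrational, non-lattice case on which the remaining effort must be concentrated.
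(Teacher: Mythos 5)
The statement you are asked about is Conjecture \ref{conj:HRT} itself, which is \emph{open}: the paper does not prove it and does not claim to. It only establishes special cases under the ultimately-positive hypothesis (Theorems \ref{thm:mainonedependence} and \ref{thm:main4pt}), and your proposal, by its own admission in the final paragraph, is a research program rather than a proof. What you do establish is already known territory that the paper also cites: the metaplectic covariance reduction is Propositions \ref{prop:metaplectic}--\ref{prop:translatingLambda}, and the lattice case is Linnell's theorem \cite{Linnell}. (A small caveat even there: after normalizing $(0,0)$ and $(1,0)$, the stabilizing shears $\begin{pmatrix}1&b\\0&1\end{pmatrix}$ can move a third point onto the vertical axis only if its frequency is nonzero, so the collinear configuration escapes your normalization and must be treated separately, as the paper does in its Case 4.)

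The genuine gap is the step where you propose to ``approximate the irrational shifts with rational ones and control the trace of the hypothetical annihilating operator as a limit along this approximation.'' This has no substance as stated, for two concrete reasons. First, linear independence of $\mc{G}(f,\La)$ is not known to be stable under perturbation of $\La$ --- stability is itself an open problem closely tied to HRT --- so knowing that every rational approximant $\La_n$ of $\La$ yields an independent system says nothing about $\La$: the hypothetical annihilating coefficients live at $\La$, not at $\La_n$, and there is no operator whose trace you can follow along the limit. Second, the zero-divisor mechanism is attached to the group von Neumann algebra of a \emph{discrete} group; when the parameters are irrationally related the shifts generate a group dense in the Heisenberg group, the relevant representation is no longer (a multiple of) the left regular representation of a discrete group, and the finite trace on which Linnell's argument rests simply does not exist in the limit object. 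This is precisely why the known non-lattice results (Demeter--Zaharescu \cite{13,22}, and the present paper) inject genuinely analytic or Diophantine input --- quantitative approximation, decay at infinity, or, here, the shifted Lonely Runner Theorem \ref{thm:3runnersslRC} combined with ultimate positivity --- rather than attempting to pass Linnell's algebraic argument to a limit. As a proof of Conjecture \ref{conj:HRT} the proposal therefore fails at exactly the case it needs; as a map of the landscape it is accurate but adds nothing beyond the reductions the paper already records.
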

  This statement is known as the \emph{HRT conjecture}. There are several partial results on this conjecture which impose conditions on either the function $f$ or on the set $\Lambda$, or on both of them. Linnell showed that the conjecture holds when $\Lambda$ is a finite subset of a lattice, using the von Neumann subalgebra of $\mathcal{B}(L^2(\mathbb{R}))$ generated by time-frequency shifts on this set \cite{Linnell}. The same result has been proven in the context of the theory of shift invariant spaces, that Bownik developed in \cite{SIspaces}, by Bownik and Speegle in \cite{wavelets} and also by Demeter and Gautam in \cite{schrodinger}. Other important results are based on the geometry of the set. In particular, Demeter and Zaharescu obtained results in \cite{13, 22} when $\Lambda$ has four points contained in two parallel lines. In addition, the results by Bownik and Speegle in \cite{exponentialdecay} and by Benedetto and Bourouihiya in \cite{infbehavior} show that the behavior of the function at infinity plays an important role.

To emphasize the difficulty of proving Conjecture \ref{conj:HRT}, we mention that several weaker versions are still open, for example, if we restrict to $f\in\mc{S}(\RR)$ (the Schwartz class) or $\abs{\La}=4$ with $\La$ not on a lattice. For the $4$-point case \cite{13, 22}, one needs deep quantitative results from Diophantine approximation, which, roughly speaking, indicate how far $\La$ is from a lattice.

The novelty of our approach is the utilization of a different tool from Diophantine approximation, namely the \emph{Lonely Runner Conjecture} \cite{willslrc} and its shifted variant \cite{sLRC}, in particular, a special case proved by the first author \cite{LRC}. With this tool, we improve the results for Conjecture \ref{conj:HRT} for ultimately positive functions in \cite{infbehavior}. 

\begin{defn}
    A function $f:\RR\to\CC$ is called \emph{ultimately positive}, if there is some $t_0\in\RR$ such that $f(t)>0, \forall t\geq t_0$.
\end{defn}

\begin{defn}
    Let $\Om\subset\RR$ be finite. The \emph{affine dimension} of $\Om$ over $\QQ$ is the dimension of the $\QQ$-vector space spanned by $\Om-\om$, where $\om\in\Om$. 
\end{defn}

We remark that the affine dimension does not depend on the choice of $\om\in\Om$ in the definition above. Our first main result is:

\begin{thm}\label{thm:mainonedependence}
Let $f \in L^2(\mathbb{R})$ be an ultimately positive function and $\Lambda=\set{(\tau_k,\om_k)}_{k=0}^N$, such that the affine dimension of $\Omega=\set{\om_0,\dotsc,\om_N}$ over $\QQ$ is at least $N-1$. Then, $\mathcal{G}(f, \Lambda)$ is linearly independent.    
\end{thm}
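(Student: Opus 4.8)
The plan is to argue by contradiction. Suppose $\mathcal{G}(f,\Lambda)$ is linearly dependent, so there are scalars $c_0,\dots,c_N$, not all zero, with $\sum_{k=0}^N c_k e^{2\pi i\omega_k t}f(t-\tau_k)=0$ for a.e.\ $t$. The affine-dimension hypothesis is hereditary: affine dimension $\ge N-1$ means the lattice of affine relations $\{a\in\mathbb{Z}^{N+1}:\sum_k a_k\omega_k=0,\ \sum_k a_k=0\}$ has rank $\le1$, and any sub-family's relation lattice is a sublattice, hence also of rank $\le1$, so a sub-family of size $M+1$ still has affine dimension $\ge M-1$. I may therefore drop vanishing coefficients and assume every $c_k\ne0$. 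Writing $c_k=|c_k|e^{i\theta_k}$ and choosing $t_0$ with $f>0$ on $[t_0,\infty)$, for every $t\ge t_0+\max_k\tau_k$ the identity reads
\[
\sum_{k=0}^N m_k(t)\,e^{i(\theta_k+2\pi\omega_k t)}=0,\qquad m_k(t):=|c_k|\,f(t-\tau_k)>0 .
\]
Ultimate positivity is used precisely to make all lengths $m_k(t)$ positive, so this is a vanishing sum of planar vectors of positive length and prescribed argument.

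The elementary engine is a half-plane criterion: if at some admissible $t$ all arguments $\psi_k(t):=\theta_k+2\pi\omega_k t$ lie in one open half-circle, $\psi_k(t)\in(\beta-\tfrac{\pi}{2},\beta+\tfrac{\pi}{2})\pmod{2\pi}$ for a single $\beta$, then projecting onto $e^{i\beta}$ gives $\sum_k m_k(t)\cos(\psi_k(t)-\beta)>0$, contradicting the vanishing. Setting $y_k(t)=\omega_k t+\theta_k/2\pi\in\mathbb{T}$, it thus suffices to find arbitrarily large $t$ at which all of $y_0(t),\dots,y_N(t)$ lie in an arc of length $<\tfrac12$. This recasts the problem as one about $N+1$ runners on $\mathbb{T}$ with speeds $\omega_k$ and starting positions $\theta_k/2\pi$: I must confine them to an open semicircle at some large time.

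To locate such a time I would use Kronecker--Weyl. In the frame of the $0$-th runner the positions $z_k(t)=(\omega_k-\omega_0)t+(\theta_k-\theta_0)/2\pi$ follow a linear flow whose forward orbit equidistributes in the coset $C=\{z:\sum_k a_k z_k\equiv\gamma\}$ cut out by the integer relations among the $\omega_k-\omega_0$; by hypothesis this relation lattice has rank $\le1$, so $C$ has codimension $\le1$. If the affine dimension is $N$ (no relation) the flow is dense in $\mathbb{T}^N$, every runner can be pushed into an arbitrarily short arc, and the half-plane criterion closes the argument at once; this recovers the independent-frequency case of Benedetto--Bourouihiya \cite{infbehavior}. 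If the affine dimension is $N-1$ there is one primitive relation $\sum_k a_k\omega_k=0$ with $\sum_k a_k=0$, and a direct computation shows $C$ contains a semicircle configuration exactly when $\gamma\in(-A/2,A/2)\bmod 1$, where $A=\sum_{a_k>0}a_k=\tfrac12\sum_k|a_k|\ge1$. For $A\ge2$ this interval has length $\ge1$ and covers $\mathbb{T}$, so a good large time always exists.

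The main obstacle is the borderline $A=1$, which forces the relation to be $\omega_i=\omega_j$ for a single pair and fails precisely when $\gamma\equiv\tfrac12$, i.e.\ when $c_i/c_j<0$: then runners $i$ and $j$ are permanently antipodal, never share a semicircle, and the half-plane criterion is genuinely unavailable. This is where I expect the (shifted) Lonely Runner input of \cite{LRC} to be needed. One route is to merge the two offending terms into a single frequency-$\omega_i$ term with the real, sign-changing amplitude $h(t)=|c_i|f(t-\tau_i)-|c_j|f(t-\tau_j)$; the remaining $N-1$ frequencies are then affinely independent, so by Kronecker I can drive their phases into an arbitrarily short arc about the direction $e^{i(\theta_i+\pi/2)}$ orthogonal to $e^{i\theta_i}$. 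Projecting onto that orthogonal direction annihilates the merged term regardless of the sign of $h$, while leaving a strictly positive contribution from the rest, again a contradiction. Carrying out this reduction uniformly, and matching the constrained single-relation step to the precise special case of the shifted Lonely Runner Conjecture, is the delicate part; the remainder is bookkeeping about admissible large times and the passage between the a.e.\ identity and a single good time.
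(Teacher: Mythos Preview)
Your plan is correct and follows essentially the same strategy as the paper: use Kronecker--Meyer approximation (the paper's Lemma~\ref{lem:simapprox}) to locate arbitrarily large times at which all phases lie in an open half-plane, handle the single-relation constraint by the perturbation/coset computation you describe (your threshold $A\ge 2$ matches the paper's $\sum_{k\ge 1}|p_k|\ge 3$ once one normalizes $\omega_0=0$), and in the borderline case of two equal frequencies use the merge-and-project-orthogonal trick, which is exactly what the paper does after translating the repeated frequency to zero and taking imaginary parts. One correction: the shifted Lonely Runner input of \cite{LRC} is \emph{not} used in the paper's proof of this theorem---your own merge trick already suffices here, and LRC is reserved for Theorem~\ref{thm:main4pt}.
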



Benedetto and Bourouihiya proved the above Theorem with the stronger assumption that $\Om$ is linearly independent over $\QQ$, even though they only use the fact that $\Om-\om_0$ is linearly independent over $\QQ$, for some $\om_0\in\Om$.

The second main result of this paper is:

\begin{thm}\label{thm:main4pt}
Let $f\in L^2(\mathbb{R})$ be an ultimately positive function and $\Lambda \subset \mathbb{R}^2$ a set of four points. Then, $\mathcal{G}(f, \Lambda)$ is linearly independent.
\end{thm}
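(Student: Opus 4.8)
The plan is to argue by contradiction: suppose $\sum_{k=0}^{3} c_k M_{\om_k} T_{\tau_k} f = 0$ with the $c_k$ not all zero, and split according to the affine dimension $d$ of $\Omega = \set{\om_0,\om_1,\om_2,\om_3}$ over $\QQ$. Here $N=3$, so Theorem \ref{thm:mainonedependence} immediately settles $d\in\set{2,3}$, since then $d \ge N-1 = 2$. When $d=0$ all four frequencies coincide; as $M_{\om_0}$ is injective, the relation reduces to $\sum_k c_k T_{\tau_k} f = 0$, and taking Fourier transforms gives $\hat f(\xi)\sum_k c_k e^{-2\pi i \tau_k \xi}=0$ a.e. Since $\hat f$ is nonzero on a set of positive measure and a nontrivial exponential sum with distinct $\tau_k$ (the $\tau_k$ are distinct because $\La$ is a $4$-point set) is a real-analytic function that cannot vanish on such a set, all $c_k=0$. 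Thus the whole content of the theorem is the case $d=1$.

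For $d=1$ I would first normalize. Writing $\om_k = \om_0 + r_k\alpha$ with $r_k\in\QQ$ and clearing denominators, a positive dilation $D_a$ (a unitary preserving ultimate positivity that sends $\mc{G}(f,\La)$ to the Gabor system of $D_a f$ over a rescaled set, with frequencies scaled by $1/a$) reduces us to $\om_k = \om_0 + n_k$ with $n_k\in\ZZ$ and $n_0=0$. Dividing by the nonvanishing factor $e^{2\pi i \om_0 t}$ yields
\[
\sum_{k=0}^{3} c_k\, e^{2\pi i n_k t} f(t-\tau_k)=0 \quad \text{for a.e. } t.
\]
The decisive feature is that the exponential coefficients are now $1$-periodic: restricting $t$ to an arithmetic progression $t=u+m$ with $m\in\ZZ$ freezes each phase at $e^{2\pi i n_k u}$, because $n_k m\in\ZZ$. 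Hence for every fixed $u\in\RR$ and every large $m$,
\[
\sum_{k=0}^{3} \tilde c_k\, f(u+m-\tau_k)=0, \qquad \tilde c_k = c_k e^{2\pi i n_k u},
\]
where all weights $f(u+m-\tau_k)$ are strictly positive. This decouples the oscillation (carried by $u$) from the positivity (carried by $m$).

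The contradiction is then elementary once $u$ is chosen well: if the phases $\set{\arg\tilde c_k : c_k\ne 0}$ all lie in a common open half-circle, there is a direction $\phi$ with $\cos(\arg\tilde c_k-\phi)>0$ for every active $k$, so taking the real part of the last identity against $e^{-i\phi}$ gives $\sum_k |c_k|\cos(\arg\tilde c_k-\phi)\,f(u+m-\tau_k)=0$, a sum of strictly positive terms for large $m$, which is impossible. Everything therefore reduces to the Diophantine statement that some $u\in\RR$ confines the points $n_k u + \tfrac{1}{2\pi}\arg c_k \pmod 1$ to an arc of length $<\tfrac12$; equivalently, an open arc of length $\tfrac12$ must be simultaneously vacated by all four points. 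Modelling the emptied arc as a stationary observer placed far from the speed-$0$ point $\gamma_0$, this is exactly a shifted lonely-runner statement \cite{sLRC} for the at most three distinct nonzero speeds among the $n_k$: I need all moving runners at distance $\ge \tfrac{1}{s+1}=\tfrac14$ from the observer with $s\le 3$, which is the special case established in \cite{LRC}.

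The case $d=1$ is the crux, and within it I expect two genuine difficulties. First, matching the HRT cancellation mechanism to the exact lonely-runner constant: the borderline value $\tfrac14$ for three moving runners is precisely why the sharp shifted result of \cite{LRC}, rather than a soft equidistribution argument, is required, and why I anticipate needing the strict form of the separation (to get an open, not merely closed, half-circle). Second, points sharing a frequency move at a common speed, so their phases $\arg c_k$ keep a fixed relative offset that no choice of $u$ can change; if such a group is internally spread over more than a half-circle the confinement step fails outright. I would dispatch these degenerate groupings separately, either by peeling off one vanishing coefficient to drop to a three-point system — where Theorem \ref{thm:mainonedependence} applies the moment two frequencies differ — or by treating the finitely many partition types of four points into frequency classes by hand. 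Checking that these reductions are exhaustive, and that every repeated-frequency configuration really does fall to the lower-dimensional results, is where I expect the bulk of the careful bookkeeping to lie.
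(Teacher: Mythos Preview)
Your high-level decomposition by affine dimension and the use of the shifted lonely-runner result for three moving speeds is exactly the paper's strategy for the generic subcase. The reduction of $d\ge 2$ to Theorem~\ref{thm:mainonedependence}, and of $d=0$ to a Fourier-side exponential-sum argument, are correct and match the paper. The substantive gap is in your treatment of $d=1$ when frequencies repeat.

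First, a smaller point. You write that you need an open arc of length $\tfrac12$ to be vacated, and then invoke the three-runner bound $\ge\tfrac14$ from \cite{LRC}. That bound only confines the moving points to a \emph{closed} semicircle centered at the stationary phase, so $\cos(\arg\tilde c_k-\phi)$ may vanish. In the paper this is not an issue because one isolates the term $f(t)$ with real coefficient $1$ and places the observer at $z=1$: then the left side is strictly positive while the right side is $\le 0$, so non-strict separation already yields a contradiction, provided the set of good times has positive measure. The positive-measure requirement is exactly what fails in the extremal ratio $1{:}2{:}3$, and the paper needs a separate device there (Proposition~\ref{prop:extreme}, which picks among three spectators at $1$, $i$, $-i$ and takes a real or imaginary part accordingly). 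You flag the borderline but do not supply this mechanism.

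The serious gap is the $(2,2)$ configuration $\om_0=\om_1<\om_2=\om_3$. Here your confinement-to-a-half-circle argument cannot succeed: after normalizing, the two speed-$0$ phases $\arg c_0,\arg c_1$ and the two speed-$n$ phases $\arg c_2,\arg c_3$ each have a fixed offset that no choice of $u$ changes, and these offsets may both equal $\pi$ (e.g.\ $c_1/c_0$ and $c_3/c_2$ real and negative). Neither of your proposed escapes works: no coefficient need vanish, and Theorem~\ref{thm:mainonedependence} does not apply since $d=1$. In the paper this case (Section~\ref{subsec:22}) is by far the longest: taking imaginary parts forces functional equations $f(t)=Cf(t-\tau)$ and $f(t)=cf(t-\tau_1)$ on a half-line, and one must show first that $C^{\tau_1}=c^{\tau}$ (else $f\notin L^2$), then that $f(t)=K C_0^{t}$ on a half-line (via Khinchin's pointwise ergodic theorem when $\tau/\tau_1\notin\QQ$), and finally derive a contradiction by pushing this formula past the left endpoint. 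None of this is bookkeeping; it is the only place in the proof where square-integrability, rather than mere ultimate positivity, is genuinely used. Your sketch does not anticipate any of these ingredients, so as written the proposal does not close the case $d=1$.
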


Again, the above Theorem strengthens a result in \cite{infbehavior}; Theorem 5.6 in \cite{infbehavior} proves linear independence of four time-frequency shifts of an ultimately positive function $f$, with the additional hypotheses that both $f(x)$ and $f(-x)$ are ultimately positive and decreasing.

The proof of Theorem \ref{thm:main4pt} is in Section \ref{sec:4pt} and is divided into five cases, depending on whether some points of $\La$ belong to the same horizontal line. The last two cases, tackled in subsections \ref{subsec:4} and \ref{subsec:22}, correspond to configurations where all elements of $\La$ are in the same line or in two horizontal lines (each line having exactly two elements of $\La$), respectively. These cases may be already known in \cite{HRT} and \cite{22}, nevertheless, we prove them here again for the special case where $f$ is ultimately positive. It should also be remarked that these are the cases where some sort of decay condition or integrability is needed for $f$; besides it is known that there are functions $f$ that are positive and periodic and satisfying nontrivial linear dependency relations among certain time-frequency translates thereof \cite{HRT}. For example, for $f(x)=2+\cos 2\pi x$ and $\La=\set{(0,0), (0,-1), (0,1), (a,0), (a,-1), (a,1)}$, $\mc{G}(f,\La)$ is linearly dependent for any $a\in\RR$, showing that HRT conjecture cannot be extended to $L^p(\TT)$.


\section{Preliminaries}

We can apply some transformations to a Gabor system, in order to change the geometry of the set $\Lambda$, without losing the property of linear independence. In particular we have the following propositions which can be found in \cite{HRT}.

\begin{prop}\label{prop:metaplectic}
If $A: \mathbb{R}^2 \rightarrow  \mathbb{R}^2$ is a linear transformation, with $detA=1$, then there exists a unitary transformation $U_{A}:  L^2(\mathbb{R}) \rightarrow  L^2(\mathbb{R})$
$$U_{A}\mathcal{G}(f, \Lambda) = \{U_{A}M_{\omega}T_{\tau}f \}_{(\tau, \om)\in\La}=\{ c(\tau, \om) M_{v}T_{u}(U_{A}f)\}_{(u,v) \in A(\Lambda)}$$
where $c(\tau, \omega)\in \mathbb{T}$, are complex numbers modulus 1.
\end{prop}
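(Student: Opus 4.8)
The plan is to build $U_A$ as a composition of a few explicit unitary operators, exploiting the classical fact that $\SL(2,\RR)=\{A:\det A=1\}$ is generated by a short list of elementary transformations for which the intertwining relation can be checked by hand.

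First I would record the two basic building blocks. The Fourier transform $\mc{F}$ on $L^2(\RR)$ satisfies $\mc{F} T_{\tau}\mc{F}^{-1}=M_{-\tau}$ and $\mc{F} M_{\om}\mc{F}^{-1}=T_{\om}$, so that $\mc{F}(M_{\om}T_{\tau})\mc{F}^{-1}=e^{2\pi i\om\tau}M_{-\tau}T_{\om}$; thus $\mc{F}$ realizes the rotation $R=\left(\begin{smallmatrix}0&1\\-1&0\end{smallmatrix}\right)$, sending $(\tau,\om)\mapsto(\om,-\tau)$, with unimodular scalar $e^{2\pi i\om\tau}$. Next, multiplication by a chirp, $C_c f(t)=e^{\pi i c t^2}f(t)$ with $c\in\RR$, is unitary, and a direct computation gives $C_c(M_{\om}T_{\tau})C_c^{-1}=e^{-\pi i c\tau^2}M_{\om+c\tau}T_{\tau}$; thus $C_c$ realizes the shear $\left(\begin{smallmatrix}1&0\\c&1\end{smallmatrix}\right)$, sending $(\tau,\om)\mapsto(\tau,\om+c\tau)$, again with unimodular scalar. (A dilation $D_a f(t)=\abs{a}^{1/2}f(at)$ similarly realizes the diagonal map $(\tau,\om)\mapsto(\tau/a,a\om)$, though this is not strictly needed.)

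I would then invoke the elementary group-theoretic fact that $\SL(2,\RR)$ is generated by the rotation $R$ together with the lower unipotent shears $\left(\begin{smallmatrix}1&0\\c&1\end{smallmatrix}\right)$, $c\in\RR$: conjugating a lower shear by $R$ produces the upper shear $\left(\begin{smallmatrix}1&-c\\0&1\end{smallmatrix}\right)$, and the two families of shears generate $\SL(2,\RR)$ by the standard transvection (row-reduction) argument. Hence any $A$ with $\det A=1$ factors as $A=A_1\cdots A_m$, each $A_j$ being $R$ or a shear.

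Finally, I would set $U_A=U_{A_1}\cdots U_{A_m}$, where $U_{A_j}$ is the corresponding $\mc{F}$ or $C_c$ above, and compose the intertwining identities from the inside out. Applying them successively conjugates $M_{\om}T_{\tau}$ to a unimodular multiple of $M_vT_u$ with $(u,v)=A_1\cdots A_m(\tau,\om)=A(\tau,\om)$, and the accumulated scalar $c(\tau,\om)$ is a product of unimodular numbers, hence lies on $\TT$. Since each $U_{A_j}$ is unitary, so is $U_A$. The main point to keep in mind is that the assignment $A\mapsto U_A$ is only a projective representation (the metaplectic representation): $U_A$ is well defined only up to a phase and depends on the chosen factorization into generators. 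This is harmless here, because Proposition \ref{prop:metaplectic} asserts only the \emph{existence} of some unitary with the stated intertwining property, not its uniqueness or functoriality; the scalars $c(\tau,\om)$ are likewise permitted to depend on $(\tau,\om)$, exactly as they do in the computation above.
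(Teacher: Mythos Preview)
Your argument is correct and is the standard construction of the metaplectic (Segal--Shale--Weil) representation: verify the intertwining relation on generators of $\SL(2,\RR)$ (Fourier transform for the rotation, chirp multiplication for lower shears), note that conjugation by the rotation turns lower shears into upper shears, and then compose. The computations you record for $\mc{F}$ and $C_c$ are accurate, and your caveat that $A\mapsto U_A$ is only projective is well placed and harmless for the statement at hand.

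As for comparison: the paper does not actually prove Proposition~\ref{prop:metaplectic}; it is quoted without proof from \cite{HRT}. So there is nothing in the paper to weigh your approach against. Your write-up supplies exactly the kind of self-contained verification that the paper omits, and it is the same argument one finds in standard time-frequency references.
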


\begin{prop}
Let $\mathcal{G}(f, \Lambda)$ be a finite Gabor system and $U_{A}:  L^2(\mathbb{R}) \rightarrow  L^2(\mathbb{R})$ a metaplectic transformation with associated linear transformation $A: \mathbb{R}^2 \rightarrow  \mathbb{R}^2$. Then
$\mathcal{G}(f, \Lambda) $ is linear independent if and only if  $\mathcal{G}(U_{A}f, A(\Lambda))$ is linear independent.
\end{prop}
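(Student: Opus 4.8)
The plan is to derive the equivalence directly from Proposition~\ref{prop:metaplectic}, invoking only that a unitary operator is a linear bijection and that rescaling each vector of a family by a nonzero scalar preserves linear (in)dependence. Since $\det A=1$, the map $A$ is invertible, hence restricts to a bijection $\La\to A(\La)$; thus the two systems $\mc{G}(f,\La)$ and $\mc{G}(U_Af,A(\La))$ have the same cardinality, and their elements are naturally indexed by the corresponding points $(\tau,\om)\in\La$ and $(u,v)=A(\tau,\om)\in A(\La)$.

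First I would record the elementwise action of $U_A$. By Proposition~\ref{prop:metaplectic}, for each $(\tau,\om)\in\La$ with image $(u,v)=A(\tau,\om)$ we have
$$U_A\,M_\om T_\tau f = c(\tau,\om)\, M_v T_u\,(U_A f), \qquad c(\tau,\om)\in\TT .$$
Consequently the image under $U_A$ of the family $\mc{G}(f,\La)=\{M_\om T_\tau f\}_{(\tau,\om)\in\La}$ is exactly the family $\{c(\tau,\om)\,M_v T_u(U_Af)\}$, which is obtained from $\mc{G}(U_Af,A(\La))=\{M_vT_u(U_Af)\}_{(u,v)\in A(\La)}$ by multiplying each member by the unimodular, hence nonzero, scalar $c(\tau,\om)$.

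The conclusion then follows from two standard observations, which together give both implications at once. A unitary operator is injective and linear, so it carries a linearly independent family to a linearly independent family and a linearly dependent one to a linearly dependent one; in other words $\mc{G}(f,\La)$ is linearly independent if and only if its image $U_A\mc{G}(f,\La)$ is. Moreover, replacing each vector of a finite family by a nonzero scalar multiple of itself changes neither the span nor any linear dependence relation among the vectors, so $U_A\mc{G}(f,\La)$ is linearly independent if and only if $\mc{G}(U_Af,A(\La))$ is. Chaining these equivalences yields the claim. There is no genuine obstacle here: the statement is essentially a bookkeeping consequence of Proposition~\ref{prop:metaplectic}, and the only points requiring (minor) care are that $A$ be a bijection between index sets—guaranteed by $\det A=1$—and that the constants $c(\tau,\om)$ be nonzero, which holds since they lie on the unit circle $\TT$.
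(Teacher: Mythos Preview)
Your argument is correct. The paper does not actually supply a proof of this proposition: it is listed among the preliminary facts ``which can be found in \cite{HRT}'' and is stated without proof. Your derivation is precisely the standard one---use Proposition~\ref{prop:metaplectic} to identify $U_A\mc{G}(f,\La)$ with a unimodular rescaling of $\mc{G}(U_Af,A(\La))$, then observe that linear bijections and nonzero scalar multiplications each preserve linear (in)dependence---and nothing more is needed.
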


\begin{prop}\label{prop:halfline}
    Let $\mc{G}(f,\La)$ be a finite Gabor system, with $f\in L^2(\RR)$ supported on a half-line. Then, $\mc{G}(f,\La)$ is linearly independent.
\end{prop}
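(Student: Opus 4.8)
The plan is to read off a contradiction directly from the supports of the translates, the whole argument resting on the elementary fact that a nonzero exponential sum with distinct frequencies vanishes only on a set of measure zero.

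First I would normalize the geometry. A half-line is either a right half-line $[a,\infty)$ or a left half-line $(-\infty,b]$; since the reflection $R\colon f(x)\mapsto f(-x)$ is a metaplectic transform (it is unitary and sends $M_{\om}T_{\tau}f$ to a unimodular multiple of $M_{-\om}T_{-\tau}(Rf)$, i.e.\ it implements $A=-\Id$, with $\det A=1$) and linear independence is preserved by such maps, I may assume without loss of generality that $f$ is supported on a right half-line and that $f\neq 0$. Set $a=\operatorname{ess\,inf}\supp f$, so that $f=0$ a.e.\ on $(-\infty,a)$ while, for every $\eps>0$, $f$ is nonzero on a positive-measure subset of $[a,a+\eps)$. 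Because modulation does not move the support, each $M_{\om_k}T_{\tau_k}f$ is then supported on $[a+\tau_k,\infty)$.

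Next, suppose for contradiction that $\sum_k c_k M_{\om_k}T_{\tau_k}f=0$ with the $c_k$ not all zero, the sum running over the distinct points $(\tau_k,\om_k)\in\La$. Let $\tau_{\min}=\min_k\tau_k$, let $S=\set{k:\tau_k=\tau_{\min}}$, and put $\tau'=\min_{k\notin S}\tau_k$ (with $\tau'=+\infty$ if $S$ exhausts the index set). Every term with $k\notin S$ is supported in $[a+\tau',\infty)$, so on the slab $[a+\tau_{\min},a+\tau')$ only the terms indexed by $S$ survive and the identity becomes
$$g(x)\,f(x-\tau_{\min})=0\quad\text{a.e.\ on }[a+\tau_{\min},a+\tau'),\qquad g(x):=\sum_{k\in S}c_k e^{2\pi i\om_k x}.$$
Since the points of $\La$ are distinct and the indices $k\in S$ share the time coordinate $\tau_{\min}$, the frequencies $\set{\om_k}_{k\in S}$ are pairwise distinct, so $g$ is an exponential sum with distinct frequencies.

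The crux is now that such a $g$, unless identically zero, extends to a nonzero entire function and hence vanishes only on a discrete set of measure zero. As $x-\tau_{\min}$ ranges over $[a,a+\tau'-\tau_{\min})$, a right neighbourhood of $a$, the factor $f(x-\tau_{\min})$ is nonzero on a set of positive measure by the choice of $a$; so the product $g\cdot f(\cdot-\tau_{\min})$ can vanish a.e.\ only if $g\equiv 0$, which by linear independence of the characters $e^{2\pi i\om_k x}$ forces $c_k=0$ for all $k\in S$. Deleting these terms and repeating --- equivalently, inducting on $\abs{\La}$ --- strips off the translation levels one by one and drives every coefficient to zero, contradicting nontriviality. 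I expect the only points needing care to be measure-theoretic bookkeeping: defining ``supported on a half-line'' through $\operatorname{ess\,inf}\supp f$ so that $f$ is genuinely nonzero immediately to the right of $a$, and the degenerate case in which all $\tau_k$ coincide, where the slab is the entire half-line and the conclusion $f\equiv 0$ is immediate.
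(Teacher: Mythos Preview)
Your argument is correct and is essentially the classical support-stripping proof: peel off the smallest translation level on the slab where only those terms live, use that a nontrivial exponential sum with distinct frequencies has measure-zero zero set, and induct. Note that the paper does not supply its own proof of this proposition---it is quoted as a known result from \cite{HRT}---and what you have written is precisely the standard argument given there.
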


However, when focusing on a certain class of functions, one needs to ensure whether $U_Af$ still belongs to the same class. In our case, the class of ultimately positive functions is \emph{shift-invariant} (though not exactly in the sense mentioned in \cite{SIspaces}), namely $f$ is ultimately positive if and only if every $T_\tau f$ is ultimately positive $\forall \tau\in\RR$.

For this purpose, we will need a weaker but simpler result.

\begin{prop}\label{prop:translatingLambda}
    Let $f$ be a nonzero measurable function, $\La\subset\RR^2$ a finite set and $\la=(\tau,\om)\in\RR^2$. Then, the Gabor system $\mc{G}(f,\la+\La)$ is linearly dependent if and only if $\mc{G}(T_\tau f,\La)$ is linearly dependent.
\end{prop}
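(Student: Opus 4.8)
The plan is to exhibit a single fixed invertible operator that carries the system $\mc{G}(T_\tau f,\La)$ onto $\mc{G}(f,\la+\La)$ element by element, preserving the coefficients of any linear relation. Once such an intertwiner is found, linear dependence of one system is tautologically equivalent to that of the other. The natural candidate is the modulation $M_\om$, since shifting the points of $\La$ by $\la=(\tau,\om)$ should correspond to pre-composing with $T_\tau$ (handled by passing from $f$ to $T_\tau f$) and post-composing with $M_\om$.

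First I would record the elementary composition laws $T_aT_b=T_{a+b}$, $M_aM_b=M_{a+b}$, together with the commutation of time shifts with each other and of frequency shifts with each other. Writing a generic point of $\La$ as $(\tau_k,\om_k)$, the corresponding element of $\mc{G}(f,\la+\La)$ is $M_{\om+\om_k}T_{\tau+\tau_k}f$. Using $T_{\tau+\tau_k}f=T_{\tau_k}(T_\tau f)$ and $M_{\om+\om_k}=M_\om M_{\om_k}$, and then moving $M_\om$ to the outside (which is legitimate because $M_\om$ commutes with $M_{\om_k}$), I obtain the identity
\[
M_{\om+\om_k}T_{\tau+\tau_k}f \;=\; M_\om\bigl[M_{\om_k}T_{\tau_k}(T_\tau f)\bigr].
\]
Thus the $k$-th vector of $\mc{G}(f,\la+\La)$ is exactly $M_\om$ applied to the $k$-th vector of $\mc{G}(T_\tau f,\La)$. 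The one point to watch here is that $M_\om$ is applied on the \emph{left}: applying the modulation on the right, or permuting it past a time shift, would introduce the phase factor coming from $T_bM_a=e^{-2\pi i ab}M_aT_b$, which I deliberately avoid.

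Finally, I would note that $M_\om$ is multiplication by the everywhere-nonvanishing function $e^{2\pi i\om t}$, hence an injective (indeed invertible) linear map on measurable functions. Consequently, for any scalars $(c_k)$,
\[
\sum_k c_k\,M_{\om+\om_k}T_{\tau+\tau_k}f \;=\; M_\om\!\left(\sum_k c_k\,M_{\om_k}T_{\tau_k}(T_\tau f)\right)
\]
vanishes if and only if the inner sum vanishes. Since the same coefficients $(c_k)$ index both relations, a nontrivial dependence in $\mc{G}(f,\la+\La)$ corresponds bijectively to a nontrivial dependence in $\mc{G}(T_\tau f,\La)$, proving the equivalence. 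There is no substantive obstacle in this argument; the only care needed is the bookkeeping of the composition laws so that $M_\om$ factors out as a single common multiplier without spurious phases.
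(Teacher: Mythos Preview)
Your proof is correct and follows essentially the same route as the paper: both factor $M_{\om+\om_k}T_{\tau+\tau_k}f = M_\om\bigl(M_{\om_k}T_{\tau_k}(T_\tau f)\bigr)$ and use that multiplication by the nowhere-vanishing function $e^{2\pi i\om t}$ preserves linear dependence. The paper writes this out pointwise for almost every $t$, while you phrase it operator-theoretically via the injectivity of $M_\om$, but the content is identical.
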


\begin{proof}
    Let $\La=\set{(\tau_j, \om_j):1\leq j\leq k)}$. Suppose that $c_1,\dotsc,c_k\in\CC$ are such that
    \begin{equation}\label{eq:lindeptranslation1}
    \sum_{j=1}^k c_jM_{\om+\om_j}T_{\tau+\tau_j}f=0.    
    \end{equation}
    This holds if and only if for almost all $t\in\RR$ it holds
    \[0=\sum_{j=1}^k c_jM_\om M_{\om_j}T_{\tau_j}(T_\tau f)(t)=\sum_{j=1}^k c_je^{2\pi i\om t}M_{\om_j}T_{\tau_j}T_\tau f(t).\]
    So, $\mc{G}(f,\la+\La)$ is linearly dependent if and only if there is a selection of coefficients $c_1,\dotsc,c_k$ satisfying \eqref{eq:lindeptranslation1} such that $c_j\neq0$ for some $j$. The latter is obviously equivalent to $\mc{G}(T_\tau f,\La)$ being linearly dependent.
\end{proof}

The following is an immediate consequence of Proposition \ref{prop:translatingLambda}.

\begin{cor}\label{cor:wlog0inLambda}
    Let $\mc{C}$ be a class of functions defined on $\RR$ that is shift-invariant, in the sense that $f\in\mc{C}$ if and only if $T_\tau f\in\mc{C}, \forall\tau\in\RR$. Let $\La\subset\RR^2$ be finite and $\la\in\RR^2$. Then, $\mc{G}(f,\La)$ is linearly independent for every $f\in\mc{C}$ if and only if $\mc{G}(f,\la+\La)$ is linearly independent for every $f\in\mc{C}$.
\end{cor}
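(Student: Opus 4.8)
The plan is to read the statement off directly from Proposition \ref{prop:translatingLambda}, converting both universally quantified assertions into their negations and matching up the ``linearly dependent'' instances on the two sides through the translation $f \mapsto T_\tau f$. Write $\la = (\tau, \om)$. Since an equivalence $P \Lrar Q$ is the same as $\neg P \Lrar \neg Q$, I would instead prove the existential form: \emph{there exists} $f \in \mc{C}$ with $\mc{G}(f, \La)$ linearly dependent if and only if \emph{there exists} $g \in \mc{C}$ with $\mc{G}(g, \la + \La)$ linearly dependent.

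For the forward implication I would take a witness $f \in \mc{C}$ with $\mc{G}(f, \La)$ linearly dependent and set $g = T_{-\tau} f$. Shift-invariance of $\mc{C}$ gives $g \in \mc{C}$, and since $T_\tau g = f$, Proposition \ref{prop:translatingLambda} (applied to $g$ and $\la$) yields that $\mc{G}(g, \la + \La)$ is linearly dependent exactly when $\mc{G}(T_\tau g, \La) = \mc{G}(f, \La)$ is; hence $g$ is the desired witness. The reverse implication is even shorter: from a witness $g \in \mc{C}$ with $\mc{G}(g, \la + \La)$ linearly dependent, the same proposition makes $\mc{G}(T_\tau g, \La)$ linearly dependent, and $f := T_\tau g$ lies in $\mc{C}$ by shift-invariance, so $f$ witnesses the left-hand side.

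The only point requiring care is the nonzero hypothesis in Proposition \ref{prop:translatingLambda}. I would dispose of it by a case split: if $0 \in \mc{C}$, then both universal statements in the corollary are already false, witnessed by the zero function (whose Gabor system is linearly dependent), so the equivalence holds trivially; if $0 \notin \mc{C}$, then every member of $\mc{C}$ is nonzero and the proposition applies verbatim to all the functions used above. This is the whole argument; there is no genuine obstacle beyond keeping track of the direction of the shift and of this degenerate case.
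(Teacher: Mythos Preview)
Your proposal is correct and is essentially the same approach as the paper, which merely states that the corollary is an immediate consequence of Proposition~\ref{prop:translatingLambda}; you have simply written out that immediate deduction explicitly, including the harmless degenerate case $0\in\mc{C}$.
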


In other words, to check if $\mc{G}(f,\La)$ is linearly independent for an ultimately positive function $f$, we may assume without loss of generality that $(0,0)\in\La$.

\section{Simultaneous Approximation on the Unit Torus}

Benedetto and Bourouihiya  proved that the HRT conjecture holds for $\mathcal{G}(f, \Lambda)$ when $f$ is an ultimately positive function and $\Lambda=\set{(\tau_k,\om_k)}_{k=0}^N$ 
have the property that $\Omega=\set{\om_0,\dotsc,\om_N}$ is linearly independent over $\mathbb{Q}$ \cite{infbehavior}. The proof is based on an argument of simultaneous approximation on the set of frequencies $\Omega$, using Kronecker's theorem. 
 The key observation is that we do not have to restrict to natural numbers to achieve simultaneous approximation. The next lemma, which was the starting point for Y. Meyer to introduce the important concept of \textit{harmonious sets} \cite{Meyer72hamronious}, is the main ingredient that allow us to modify the proof of Benedetto and Bourouihiya \cite{infbehavior}.

\begin{lemma}\label{lem:simapprox}
Let $\Lambda=\set{\lambda_1,\dotsc,\lambda_n}$ an arbitrary subset of real numbers and $x_1,\dotsc,x_n$ also an arbitrary sequence of real numbers. Then the following are equivalent:
\begin{enumerate}
    \item For each $\epsilon>0$ there is a real number $t$ such that $$\| t \lambda_j-x_j \|_{\mathbb{R}/\mathbb{Z}} \leq \epsilon, \ \ \   1 \leq j \leq n.$$
    \item For each sequence of rational integers $p_1,\dotsc,p_n$, we have the implication 
\begin{eqnarray}
p_1 \lambda_1+\dotsb+p_n \lambda_n=0 \implies p_1 x_1+\dotsb+p_n x_n =0 (mod1)    
\end{eqnarray}
    
\end{enumerate}
\end{lemma}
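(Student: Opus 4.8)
The plan is to recast both conditions as statements about the compact abelian group $\TT^n=(\RR/\ZZ)^n$ and then read off their equivalence from Pontryagin duality. Consider the continuous one-parameter homomorphism
\[
\phi\colon\RR\longrightarrow\TT^n,\qquad \phi(t)=(t\lambda_1,\dotsc,t\lambda_n)+\ZZ^n,
\]
and let $H=\overline{\phi(\RR)}$ be the closure of its image, a closed subgroup of $\TT^n$. Writing $x=(x_1,\dotsc,x_n)$, condition (1) says exactly that the coset $x+\ZZ^n$ lies within sup-distance $\epsilon$ of $\phi(\RR)$ for every $\epsilon>0$, i.e. that $x+\ZZ^n\in H$. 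Thus the lemma is equivalent to the assertion that $x+\ZZ^n\in H$ if and only if $\sum_j p_jx_j\in\ZZ$ for every $p=(p_1,\dotsc,p_n)\in\ZZ^n$ satisfying $\sum_j p_j\lambda_j=0$, and this is what I would establish.

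First I would dispose of the implication (1)$\Rightarrow$(2), which needs no duality. Fix integers $p_1,\dotsc,p_n$ with $\sum_j p_j\lambda_j=0$, and given $\epsilon>0$ choose $t$ as in (1); write $t\lambda_j-x_j=m_j+\delta_j$ with $m_j\in\ZZ$ and $\abs{\delta_j}\le\epsilon$. Since $t\sum_j p_j\lambda_j=0$, summing $p_j(t\lambda_j-x_j)$ yields $\sum_j p_jx_j=-\sum_j p_jm_j-\sum_j p_j\delta_j$, so the distance from $\sum_j p_jx_j$ to $\ZZ$ is at most $\epsilon\sum_j\abs{p_j}$. Letting $\epsilon\to0$ with the $p_j$ fixed forces $\sum_j p_jx_j\in\ZZ$, which is (2).

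The substantive direction is (2)$\Rightarrow$(1), a Kronecker-type density statement, and here I would invoke duality. Identify the dual group $\wh{\TT^n}$ with $\ZZ^n$, where $p=(p_1,\dotsc,p_n)$ acts by $\chi_p(\theta)=e^{2\pi i\sum_j p_j\theta_j}$. I first compute the annihilator $H^{\perp}=\set{\chi_p:\chi_p|_H\equiv1}$: since $\chi_p(\phi(t))=e^{2\pi it\sum_j p_j\lambda_j}$ is identically $1$ in $t$ precisely when $\sum_j p_j\lambda_j=0$, we obtain $H^{\perp}=\set{p\in\ZZ^n:\sum_j p_j\lambda_j=0}$. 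By the Pontryagin biduality theorem for closed subgroups of a locally compact abelian group, $H=(H^{\perp})^{\perp}$, i.e. a point $\theta+\ZZ^n$ lies in $H$ if and only if $\chi_p(\theta)=1$ for every $p\in H^{\perp}$. Spelled out, $x+\ZZ^n\in H$ exactly when $\sum_j p_jx_j\in\ZZ$ for all $p$ with $\sum_j p_j\lambda_j=0$, which is condition (2); combined with the reformulation of the first paragraph, this gives (1).

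The one genuinely nontrivial ingredient — the main obstacle — is the biduality identity $H=(H^{\perp})^{\perp}$, equivalently the assertion that every character of $\TT^n$ that is trivial on $H^{\perp}$ is evaluation at some point of $H$. This is precisely where Kronecker's theorem enters, and it can be supplied either by citing Pontryagin duality for $\TT^n$ or, if one prefers a self-contained argument, by a Hahn--Banach/Stone--Weierstrass separation: a point $\theta+\ZZ^n\notin H$ can be separated from the closed subgroup $H$ by a trigonometric polynomial, whose characters then lie in $H^{\perp}$ while the construction keeps $\theta$ off $H$. Either route closes the proof.
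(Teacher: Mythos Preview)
Your proof is correct, and the easy direction $(1)\Rightarrow(2)$ is essentially identical to the paper's. The substantive direction $(2)\Rightarrow(1)$ is handled differently, however. The paper argues constructively: it applies the structure theorem for finitely generated subgroups of $\RR$ to write $\langle\Lambda\rangle=\ZZ\omega_1\oplus\cdots\oplus\ZZ\omega_m$ with the $\omega_j$ linearly independent over $\QQ$, observes that condition~(2) exactly says that $\lambda_j\mapsto x_j\bmod 1$ extends to a homomorphism $\chi:\langle\Lambda\rangle\to\RR/\ZZ$, applies the classical Kronecker theorem to approximate $(\chi(\omega_1),\dotsc,\chi(\omega_m))$ by $(t\omega_1,\dotsc,t\omega_m)$, and then propagates the approximation back to the $\lambda_j$ via the integer coordinates. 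Your route instead packages everything into the biduality identity $H=(H^{\perp})^{\perp}$ for the closed subgroup $H=\overline{\phi(\RR)}\subset\TT^n$: once $H^{\perp}$ is identified with $\{p\in\ZZ^n:\sum p_j\lambda_j=0\}$, condition~(2) is literally the statement $x+\ZZ^n\in(H^{\perp})^{\perp}$. The paper's argument is more elementary and yields explicit constants (the approximation error is bounded by a fixed multiple of $\epsilon$ depending only on the change-of-basis integers), while your duality argument is conceptually cleaner and makes the group-theoretic content transparent, at the cost of invoking a stronger black box whose proof, as you note, ultimately rests on Kronecker anyway.
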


Before we prove the lemma we notice that condition \textit{(2)}, can be restated as follows:
Let $H=\langle \Lambda \rangle$ be the subgroup of $ \mathbb{R}$ generated by $\Lambda=\{ \lambda_1,\dotsc,\lambda_n \}$ then there is a homomorphism $\chi:H \rightarrow \mathbb{R}/\mathbb{Z}$ such that $$\chi(\lambda_{j})=x_{j} (mod1)$$ for each $ 1 \leq j \leq n$.

\begin{proof} $(1) \implies (2)$ Let $\epsilon>0$. If $p_1 \lambda_1+...+p_n \lambda_n=0$ then for each $t \in \mathbb{R}$,\\ $p_1 t\lambda_1+\dotsb+p_n t\lambda_n=0$. Thus
$$\| p_1 x_1+\dotsb+p_n x_n\|_{\mathbb{R}/\mathbb{Z}}=\| p_1 x_1+...+p_n x_n-(p_1 t\lambda_1+\dotsb+p_n t\lambda_n)\|_{\mathbb{R}/\mathbb{Z}} \leq$$
 $$\| p_1 (x_1-t\lambda_1) \|_{\mathbb{R}/\mathbb{Z}}+\dotsb+ \| p_n (x_n-t\lambda_n) \|_{\mathbb{R}/\mathbb{Z}}\leq C \epsilon$$
where $C=max_j |p_j|$ and the last inequality holds due to $(1)$.\\
$(2) \implies (1)$ By structure theorem for submodules of free modules on principal rings, we have that
$$H=\langle \Lambda \rangle = \mathbb{Z}\omega_1 \oplus ...\oplus \mathbb{Z}\omega_m$$
where $m \leq n$ and $\omega_1,\dotsc,\omega_m$ are linear independent over $\mathbb{Q}$. The observation after the lemma give us a homomorphism $\chi:H \rightarrow \mathbb{R}/\mathbb{Z}$ such that $\chi(\lambda_{j})=x_{j} (mod1)$ for each $ 1 \leq j \leq n$. For each $\epsilon>0$, from Kronecker's theorem, there exists a real number $t$ such that $$\| t \omega_j-\chi(\omega_j) \|_{\mathbb{R}/\mathbb{Z}} \leq \epsilon, \ \ \   1 \leq j \leq m$$
Since $\lambda_j \in H$ for each $1 \leq j \leq n$:
$$\| t \lambda_j-x_j \|_{\mathbb{R}/\mathbb{Z}}=\| t \lambda_j-\chi(\lambda_j) \|_{\mathbb{R}/\mathbb{Z}}=\norm*{t \sum_{j=1}^{m} k_j \omega_j-\chi(\sum_{j=1}^{m} k_j \omega_j)}_{\mathbb{R}/\mathbb{Z}}\leq $$

\[\norm*{\sum_{j=1}^{m} k_j t \omega_j- \sum_{j=1}^{m} k_j \chi (\omega_j)}_{\mathbb{R}/\mathbb{Z}} \leq \sum_{j=1}^{m} \abs{k_j} \norm{t \omega_j-\chi(\omega_j)}_{\mathbb{R}/\mathbb{Z}} \leq C \epsilon. \qedhere\]
\end{proof}

The Lemma \ref{lem:simapprox} gives a necessary and sufficient condition to simultaneously approximate a finite sequence of reals from a given set.  
\begin{defn}
 For a set $\Lambda=\{ \lambda_1,...,\lambda_n \}$ of real numbers we say that $x_1,...,x_n$ is a \textit{good sequence} for $\Lambda$ if it satisfies (4.1), otherwise we say that it is a \emph{bad sequence} for $\Lambda$. We can use the same definition for a sequence on the unit torus via the obvious isomorphism. 
\end{defn}

\begin{proof}[Proof of Theorem \ref{thm:mainonedependence}]
Suppose that $\om_0\leq\om_1\leq\dotsb\leq\om_N$ and let $\Omega=\set{\om_0,\dotsc,\om_N}$. If $\Om-\om_0$ is linearly independent over $\mathbb{Q}$ we have that the Gabor system $\mathcal{G}(f, \Lambda)$ is linearly independent from the proof of Theorem 5.6 in \cite{infbehavior}. So, we assume that there is exactly one linear dependence on the set of frequencies, say 
 \begin{eqnarray}
 p_1 \omega_1+\dotsb+p_N \omega_N=0, \ \ \ \ p_1,\dotsc,p_N \in \mathbb{Z}, \ \ \ \ \gcd(p_1,\dotsc,p_N)=1    
 \end{eqnarray}
The linear dependence of $ \mathcal{G}(f, \Lambda)$ implies that there are constants $c_1, c_2,\dotsc, c_N \in \mathbb{C} \setminus \{ 0 \}$ such that
\begin{equation}\label{eq:lindepmany}
f(t)= \sum_{k=1}^{N}c_k e^{2\pi i \omega_k t}f(t-\tau_k)   \  \  \  \  \  \  \ a.e.
\end{equation}
Define $\theta_k \in \mathbb{R}, 1\leq k\leq N$ to satisfy $e^{2\pi i \theta_k}=\frac{|c_k|}{c_k}i$ for $k=1,\dotsc,N$. Let 
$$p_1 \theta_1+\dotsb+p_N \theta_N=\alpha.$$
Our goal is to perturb $\theta_k$ as little as necessary to $\theta_k+\vphi_k$, such that $\theta_k+\vphi_k$ is a good sequence for $\om_1,\dotsc,\om_N$. In particular, we want $\abs{\vphi_k}<\frac{1}{4}$, so that $\mathrm{Im}(c_ke^{2\pi i(\theta_k+\vphi_k)})>0$, for all $k=1,\dotsc,N$.
Since
\[\sum_{k=1}^Np_k(\theta_k+\vphi_k)\in\ZZ,\]
we must have 
\[\sum_{k=1}^Np_k\vphi_k\equiv-\al\bmod1.\]
The smallest possible absolute value of an element in $-\al\bmod1$ is at most $1/2$. On the other hand, all the possible values of the sum $\sum_{k=1}^Np_k\vphi_k$ form the interval
\[\bra{-\frac{1}{4}\sum_{k=1}^N\abs{p_k},\frac{1}{4}\sum_{k=1}^N\abs{p_k}}.\]
Hence, if $\sum_{k=1}^N\abs{p_k}\geq3$, the existence of such $\vphi_k$, $k=1,\dotsc,N$ is guaranteed.

Select $\eps>0$ such that $\abs{\vphi_k}+\eps<\frac{1}{4}$, and consider the set of all $t\in\RR$ such that
\[\norm{t\om_k-(\theta_k+\vphi_k)}_{\RR/\ZZ}\leq\eps, \;\;\; k=1,\dotsc,N,\]
which is nonempty by Lemma \ref{lem:simapprox} and unbounded above. Therefore, the set
\[T=\set{t\in\RR:\mathrm{Im}(c_ke^{2\pi i\om_kt})>0}\]
satisfies $m(T\cap[t_0,\infty))>0$, $\forall t_0\in\RR$. Now, let $t_0\in\RR$ be such that $f(t-\tau_k)>0, \forall t\geq t_0, \forall k=0,1,\dotsc,N$. Taking imaginary parts in \eqref{eq:lindepmany}, we get
\[0=\sum_{k=1}^N\mathrm{Im}(c_ke^{2\pi i\om_kt})f(t-\tau_k),\]
for almost all $t\geq t_0$. But this is clearly a contradiction if $t\in T$.

The only cases that are not covered by the above argument are when $\sum_{k=1}^N\abs{p_k}=1$ or $2$. In the first case, this simply means that one more frequency is zero, say $\om_1$, but all the rest are linearly independent over $\QQ$, whereas in the second one, it means that two frequencies are equal in absolute value, hence equal, since they are all nonnegative, say $\abs{\om_j}=\abs{\om_{j+1}}$, therfore $\om_1,\om_2,\dotsc,\om_j,\om_{j+2},\dotsc,\om_N$ must be linearly independent over $\QQ$.

Applying Corollary \ref{cor:wlog0inLambda}, both cases above may be reduced to $\om_0=\tau_0=\om_1=0$, $\om_2,\dotsc,\om_N$ linearly independent over $\QQ$ (here, not all $\om_k$ are necessarily nonnegative, but this won't be a problem). We rewrite the linear dependence relation as
\begin{equation}\label{eq:lindepmany2}
f(t)+c_1f(t-\tau_1)= \sum_{k=2}^{N}c_k e^{2\pi i \omega_k t}f(t-\tau_k)   \  \  \  \  \  \  \ a.e.
\end{equation}
We may assume without loss of generality that $\mathrm{Im}(c_1)\leq0$. By Kronecker's Theorem, the set $T$ for which 
\[\norm{\om_kt-\theta_k}_{\RR/\ZZ}\leq\eps, \;\;\; 2\leq k\leq N,\]
satisfies $m(T\cap[t_0,\infty))>0$, $\forall t_0\in\RR$, where $\eps>0$; here, we take $\eps<\frac{1}{4}$, so that
\[\mathrm{Im}(c_ke^{2\pi i\om_kt})>0, \;\;\;\forall t\in T,\;\; 2\leq k\leq N.\]
Taking imaginary parts in \eqref{eq:lindepmany2} we get
\[\mathrm{Im}c_1\cdot f(t-\tau_1)=\sum_{k=2}^N\mathrm{Im}(c_ke^{2\pi i\om_kt})f(t-\tau_k),\]
for almost all $t\geq t_0$, where $t_0$ is such that $f(t-\tau_k)>0$, $\forall t\geq t_0$, $0\leq k\leq N$. But this establishes a contradiction, since for every $t$ in the set of positive measure $T\cap[t_0,\infty)$ it holds
\[\mathrm{Im}c_1\cdot f(t-\tau_1)\leq0<\sum_{k=2}^N\mathrm{Im}(c_ke^{2\pi i\om_kt})f(t-\tau_k).\]
This completes the proof.
\end{proof} 

\section{The Lonely Runner Conjecture and its shifted variant}

The \emph{Lonely Runner Conjecture} was initially stated by Wills \cite{willslrc} in the setting of Diophantine approximation: it simply states that if we have $n$ runners with pairwise distinct positive velocities $v_1,\dotsc,v_n$, all starting from the same point on a circular track of length 1, then at some point in time the distance (arc length) of every runner from the start is at least $\frac{1}{n+1}$. There has been some recent work on this problem; we refer the reader to \cite{7lrc, LRC, MSS, tao}.

The \emph{shifted} version of the Lonely Runner Conjecture has a similar statement:
\begin{conj}[Lonely Runners with individual starting points \cite{sLRC}]\label{conj:sLRC}
 Given pairwise distinct positive velocities $v_1,\dotsc,v_n \in \mathbb{R}$
and arbitrary starting points $s_1,\dotsc,s_n \in \mathbb{R}$ 
there is a real number $t$ such that for all $1 \leq j \leq n$ the distance of $s_j+tv_j$ to the nearest integer is at least $\frac{1}{n+1}$.
\end{conj}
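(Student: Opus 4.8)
The plan is to recast the conjecture as a covering statement on the circle and to argue that the ``bad'' times cannot fill the entire time axis. Writing $\delta=\frac{1}{n+1}$ and, for each runner,
\[B_j=\set{t\in\RR:\norm{s_j+tv_j}_{\RR/\ZZ}<\delta},\]
the claim is exactly that $\RR\neq\bigcup_{j=1}^n B_j$, i.e.\ that some $t$ lies outside every bad set. First I would attempt the standard reduction to \emph{integer} velocities: approximate the $v_j$ by rationals and clear denominators, so that each $B_j$ becomes periodic and, after rescaling time, all of them share the period $1$; the problem then lives on a single circle. For the shifted problem this step already requires care, since the shifts $s_j$ must be approximated jointly with the $v_j$, but the template is the one used in the unshifted case \cite{willslrc}.

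On this circle each $B_j$ is a finite union of arcs of total length $2\delta=\frac{2}{n+1}$, so the naive union bound gives $\sum_{j=1}^n\abs{B_j}=\frac{2n}{n+1}$. Since this is $\geq 1$ for every $n\geq 1$, the measure estimate \emph{never} by itself produces an uncovered time: the whole force of the conjecture is that the bad arcs are compelled to overlap substantially. The route I would pursue is therefore to bound the measure of $\bigcup_j B_j$ from above by more than a union bound --- expanding the indicator of each arc in a Fourier series on $\RR/\ZZ$ (or discretizing $t$ over $\ZZ/m\ZZ$ for a common multiple $m$ of the velocities and counting) --- and to show via a second-moment or correlation estimate that the pairwise intersections $\abs{B_i\cap B_j}$ are, on average, large enough to pull the measure of the union strictly below $1$.

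\emph{The hard part}, and the reason this is still a conjecture, is precisely that overlap estimate, required uniformly in $n$ and for \emph{arbitrary} shifts. In the unshifted case $s_j=0$ one has extra leverage --- the symmetry $t\mapsto-t$ and the fact that every runner sits at the start at $t=0$ anchor the combinatorics --- and even then a complete proof is known only for small numbers of runners \cite{7lrc}, via heavy case analysis and number theory. Arbitrary shifts destroy this symmetry, so I do not expect this route to settle the conjecture in full. A realistic target is small $n$, or velocity sets carrying additional arithmetic structure; this is exactly the regime in which the first author proves a special case \cite{LRC}, and it is that special case, rather than the full conjecture, which the present paper will invoke for the HRT application. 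My concrete plan is thus to record the reduction and covering reformulation above and to match the instances actually needed against the structured cases supplied by \cite{LRC}.
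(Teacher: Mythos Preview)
This statement is labeled a \emph{conjecture} in the paper, and the paper does not prove it. There is no ``paper's own proof'' to compare your proposal against: the paper simply states Conjecture~\ref{conj:sLRC}, remarks that $n=1$ is trivial and $n=2$ is easy, and then records the $n=3$ case as Theorem~\ref{thm:3runnersslRC}, citing \cite{LRC}. The only results the paper actually \emph{uses} are the $n=2$ case (in the proof of Claim~\ref{claim:halfline}) and the $n=3$ case via Theorem~\ref{thm:3runnersslRC}, supplemented by Proposition~\ref{prop:extreme} for the extremal $1{:}2{:}3$ velocity ratio.

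Your proposal is therefore not a proof attempt in the usual sense but a (correct) diagnosis of why the statement is open, together with a plan to fall back on the known small cases. That assessment matches the paper's own stance exactly. The covering reformulation and the measure/Fourier heuristic you sketch are standard and reasonable, and you are right that the union bound $\sum_j\abs{B_j}=\tfrac{2n}{n+1}\geq 1$ is never sufficient by itself; you are also right that arbitrary shifts kill the $t\mapsto -t$ symmetry that helps in the unshifted problem. But none of this is a proof, and you already say so. The one actionable point is that your ``reduction to integer velocities'' step is not needed for the paper's purposes: in the application the ratios $\om_1:\om_2:\om_3$ are already rational (affine dimension one), so the periodicity is automatic, and the paper simply invokes Theorem~\ref{thm:3runnersslRC} and Proposition~\ref{prop:extreme} directly rather than proving anything about Conjecture~\ref{conj:sLRC} itself.
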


The case $n=1$ is trivial and $n=2$ is very easy \cite{sLRC}. In \cite{LRC} the case of three runners was proved for Conjecture \ref{conj:sLRC} and the Conjecture has been further explored in \cite{MSS}; moreover, the extremal cases have been described completely.

\begin{thm}[\cite{LRC}]\label{thm:3runnersslRC}
Consider three runners with pairwise distinct constant velocities $0<v_1<v_2<v_3$, who start running on a circular track of length 1, with not necessarily identical starting positions. A stationary spectator watches the runners from a fixed position along the track. Then, there exists a time at which all the runners have distance at least 1/4 from the spectator.

Moreover, if $v_1:v_2:v_3$ is not proportional to $1:2:3$, then $1/4$ above may be replaced with $1/4+\eps$, for sufficiently small $\eps>0$. 
\end{thm}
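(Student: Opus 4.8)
The plan is to transfer the problem to the three-torus $\TT^3=(\RR/\ZZ)^3$ and split into cases according to the rank of the velocity vector, isolating the commensurable case as the only genuinely hard one. Normalize the spectator to sit at $0\in\RR/\ZZ$; writing $s=(s_1,s_2,s_3)$ and $v=(v_1,v_2,v_3)$, the goal becomes to find $t$ with $(s_1+tv_1,s_2+tv_2,s_3+tv_3)\bmod\ZZ^3$ lying in the closed cube $C=[1/4,3/4]^3$, since $\norm{x}_{\RR/\ZZ}\ge 1/4$ is equivalent to $x\in[1/4,3/4]\bmod 1$. By Kronecker--Weyl the closure of the orbit $\{s+tv\bmod\ZZ^3\}$ is a coset $s+H$, where $H=\overline{\{tv\bmod\ZZ^3\}}$ is the subtorus cut out by the integer linear relations among $v_1,v_2,v_3$. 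The whole problem is therefore: show that $s+H$ meets $C$.

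I would then case on $\dim H$. If $\dim H=3$ (no rational relation) the orbit is dense, so it meets the interior of $C$ and we get the strict bound $1/4+\eps$. If $\dim H=2$ there is a single primitive relation $a_1v_1+a_2v_2+a_3v_3=0$ with $\gcd(a_1,a_2,a_3)=1$, and the orbit closure is the affine subtorus $\set{x:a_1x_1+a_2x_2+a_3x_3\equiv c\bmod1}$, $c=a_1s_1+a_2s_2+a_3s_3$, in which the orbit is dense. As each $x_j$ ranges over the open interval $(1/4,3/4)$ the form $a_1x_1+a_2x_2+a_3x_3$ sweeps an open interval of length $\tfrac12(\abs{a_1}+\abs{a_2}+\abs{a_3})$; and since $v_1<v_2<v_3$ are distinct and positive, a nontrivial relation must satisfy $\abs{a_1}+\abs{a_2}+\abs{a_3}\ge 3$, because any smaller value forces some $v_j=0$ or $v_i=\pm v_j$. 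Hence this interval has length $\ge 3/2>1$, surjects onto $\RR/\ZZ$, and attains $c$ at an interior point of $C$; the dense orbit then enters this open set, again giving $1/4+\eps$.

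The remaining and tight case is $\dim H=1$: all $v_j$ are commensurable, and after rescaling time we may take $v_j=q_j$ to be positive integers with $\gcd(q_1,q_2,q_3)=1$ and $q_1<q_2<q_3$, while $u=tv\bmod 1$ ranges over all of $\RR/\ZZ$. We must find $u$ with $\cos 2\pi(s_j+q_ju)\le 0$ for $j=1,2,3$, i.e. each runner in $[1/4,3/4]$. Letting $\chi$ be the indicator of the open bad arc $(-1/4,1/4)$ and $\chi_j(u)=\chi(s_j+q_ju)$, the measure of the good set equals $\int_0^1\prod_{j=1}^3(1-\chi_j)\,\di u$. Expanding the product and evaluating each term by the Fourier coefficients $\wh{\chi}(k)=\sin(\pi k/2)/(\pi k)$, the singleton integrals contribute $1-\tfrac32$, while the pairwise and triple integrals reduce to lattice sums over the relations $mq_j+nq_k=0$ and $mq_1+nq_2+rq_3=0$, whose values enter through shift phases $e^{2\pi i(ms_j+\dotsb)}$. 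The target is to show this measure is strictly positive for every configuration except when $(q_1,q_2,q_3)$ is proportional to $(1,2,3)$, yielding $1/4+\eps$ in all the remaining commensurable cases.

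The main obstacle is precisely this commensurable estimate: the shift-dependent cross terms can be negative, and at the extremal ratio $1:2:3$ with the worst shifts the good set degenerates to a single point of measure zero, so a Fourier/measure bound can only deliver strict positivity away from the boundary. To recover the non-strict bound $1/4$ at the extremal configurations I would use compactness: the quantity $\Phi(q,s)=\sup_{u\in\RR/\ZZ}\min_j\norm{s_j+q_ju}_{\RR/\ZZ}$ is attained (the circle is compact) and depends continuously on $s\in\TT^3$, so for fixed velocities it suffices to control the finitely-describable extremal shift configurations, at which an explicit optimization gives value exactly $1/4$ for $q\propto(1,2,3)$ and strictly more otherwise. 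Passing to the limit along good times of nearby non-extremal data then produces a time at which all three distances are $\ge 1/4$, and the equality analysis pins down $1:2:3$ as the unique obstruction to strictness.
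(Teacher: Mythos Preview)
The paper does not prove this theorem; it is quoted from \cite{LRC} and used as a black box. So there is no ``paper's own proof'' to compare against here.

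On the merits of your sketch: the reduction via Kronecker--Weyl to the rank of the relation lattice is clean, and your handling of $\dim H=3$ and $\dim H=2$ is correct. In particular, the observation that a primitive integer relation among three distinct positive reals must have $\abs{a_1}+\abs{a_2}+\abs{a_3}\ge 3$, so that the linear form on the open cube $(1/4,3/4)^3$ sweeps an interval of length $\ge 3/2>1$ and hence surjects onto $\RR/\ZZ$, is exactly the right way to dispose of that case.

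The commensurable case $\dim H=1$ is where the content of the theorem lies, and your proposal does not close it. You set up the Fourier expansion of the good-set measure but do not evaluate it; the cross terms depend on the shifts and on the arithmetic of $(q_1,q_2,q_3)$, and showing the measure is strictly positive for every integer triple not proportional to $(1,2,3)$ and every shift is a genuine case analysis that you have not carried out. The compactness paragraph does not rescue this: continuity of $\Phi(q,\cdot)$ in $s$ tells you the infimum over $s\in\TT^3$ is attained, but you still have to compute that infimum for each integer triple $q$ and show it is $\ge 1/4$ (with equality only at $q\propto(1,2,3)$). Your phrase ``passing to the limit along good times of nearby non-extremal data'' has no meaning here, since $q$ is a fixed integer vector and there is no continuous family of non-extremal velocities to approximate by. What remains is precisely the shifted three-runner problem for integer velocities, which is the theorem of \cite{LRC}; your argument reduces the real-velocity statement to it but does not prove it.
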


For the proof of Theorem \ref{thm:main4pt}, it is crucial to find a set $T$, such that $m(T\cap[\al,\infty))>0$, $\forall\al\in\RR$, such that at every moment $t\in T$, the three runners have distance at least 1/4 from the spectator.

Theorem \ref{thm:3runnersslRC} states that, for distinct positive integer velocities, this always happens except for the extremal case where $v_1:v_2:v_3$ is proportional to $1:2:3$. In this case, a weaker result suffices for our purposes.

\begin{prop}\label{prop:extreme}
    Suppose that three runners have velocities $v_j=j$, $j=1,2,3$, and running on the circular track $S^1\subset\CC$ with arbitrary starting points. Consider three spectators, located at $1$, $i$ and $-i$. Denote by $T_k$ the set of moments for which the three runners have distance greater than 1/4 from the spectator located at $k$. Then, for at least one $k\in\set{1,i,-i}$, it holds $m(T_k\cap[\al,\infty))>0$, $\forall \al\in\RR$.
\end{prop}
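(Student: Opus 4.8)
Looking at Proposition~\ref{prop:extreme}, I need to prove that for three runners with velocities $1, 2, 3$ and arbitrary starting points, at least one of three specific spectators (located at $1$, $i$, $-i$ on the unit circle) has a positive-measure, unbounded-above set of times where all runners stay at distance greater than $1/4$.

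Let me set up the problem. The runners are at positions $s_j + jt \pmod 1$ on the track of length 1 (identifying the track with $\RR/\ZZ$). A spectator at a point on the circle corresponds to a fixed position; the distance being "greater than $1/4$" means all three runners avoid the quarter-arc around the spectator. Since velocities are $1, 2, 3$, the positions $s_1 + t$, $s_2 + 2t$, $s_3 + 3t$ are each linear in $t$ with distinct slopes, so by Kronecker's theorem (or Lemma~\ref{lem:simapprox}) the only constraint on simultaneous approximation comes from the single linear dependence $1\cdot 3 - 3\cdot 1 = 0$ — but more carefully, the velocities $1,2,3$ satisfy $1 - 2\cdot 2 + 3 = 0$, i.e. $v_1 - 2v_2 + v_3 = 0$. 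This means the combination $(s_1 + t) - 2(s_2+2t) + (s_3+3t) = s_1 - 2s_2 + s_3$ is constant in $t$. So the three runner-positions are not independent: their combination with coefficients $(1,-2,1)$ is pinned to a constant $c = s_1 - 2s_2 + s_3 \pmod 1$.

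My plan is as follows. First I would translate "spectator at $k \in \{1, i, -i\}$" into a target phase: watching from position $\beta_k$ means we want each runner-position to avoid $(\beta_k - 1/4, \beta_k + 1/4)$, equivalently we want $\|s_j + jt - \beta_k\|_{\RR/\ZZ} > 1/4$ for all $j$. The three choices of spectator correspond to three target vectors $(\beta_k, \beta_k, \beta_k)$. The key idea is that since there is exactly one dependence among the velocities, the closure of the curve $\{(s_1+t, s_2+2t, s_3+3t) : t \in \RR\}$ in $(\RR/\ZZ)^3$ is a coset of the 2-dimensional subtorus $H = \{(x,y,z) : x - 2y + z = 0\}$, and this curve is equidistributed on that coset. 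So the desired set of good times has positive measure (and is unbounded above by the structure of the flow) \emph{if and only if} the target point, reduced against the coset, admits a genuinely avoidable configuration — that is, iff the slab $\{\|x - \beta_k\| > 1/4,\ \|y-\beta_k\|>1/4,\ \|z-\beta_k\|>1/4\}$ intersects the coset $H + c$ in positive 2-dimensional measure. Thus I must show that for \emph{at least one} $k \in \{1,i,-i\}$ this intersection is non-null.

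The main obstacle — and the crux of the proof — is the combinatorial/geometric verification that the three spectator choices cannot all fail simultaneously. The three spectators at $1, i, -i$ correspond to phases $\beta$ differing by multiples of roughly $1/4$ (since $i = e^{2\pi i/4}$ corresponds to phase $1/4$ and $-i$ to $3/4$, while $1$ corresponds to phase $0$); these are chosen precisely so that their quarter-arcs tile the circle efficiently. I expect the argument to reduce to analyzing the constant $c = s_1 - 2s_2 + s_3 \pmod 1$: on the coset $H + c$, parametrized by two free coordinates, I would write down explicitly the region where runner positions simultaneously avoid the spectator's quarter-arc, and show that as the spectator phase $\beta$ ranges over $\{0, 1/4, 3/4\}$, these avoidance regions cannot all have measure zero for any value of $c$. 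This is a finite case-check on the parameter $c \in [0,1)$, comparing the "blocked" measure for each of the three spectators; the choice of spectator locations at $1, i, -i$ rather than at a single point is exactly what provides the slack needed to beat the $1/4$ threshold when the extremal $1:2:3$ geometry would otherwise force a failure. Once positive measure is established for some $k$, unboundedness above follows because the orbit of the linear flow on the coset returns to any positive-measure set at an unbounded sequence of times.
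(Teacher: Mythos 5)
Your approach has a fatal structural error at its foundation. You claim that $v_1-2v_2+v_3=0$ is the \emph{single} linear dependence among the velocities, so that the orbit closure of $t\mapsto(s_1+t,\,s_2+2t,\,s_3+3t)$ in $(\RR/\ZZ)^3$ is a coset of the $2$-dimensional subtorus $H=\set{x-2y+z=0}$ on which the orbit equidistributes. This is false: the velocities $1,2,3$ span a one-dimensional $\QQ$-vector space, so the relation lattice $\set{p\in\ZZ^3:p_1+2p_2+3p_3=0}$ has rank \emph{two} (it also contains $(2,-1,0)$ and $(3,0,-1)$), and the orbit is periodic with period $1$ --- a closed one-dimensional curve, which cannot equidistribute on any $2$-dimensional set. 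Consequently your criterion ``$m(T_k)>0$ iff the avoidance slab meets $H+c$ in positive $2$-dimensional measure'' is wrong, and concretely so: take $s_1=s_2=s_3=0$ and the spectator at $1$ (phase $0$). The point $(1/2,1/2,1/2)$ lies in $H$ and in the open avoidance region, so that intersection is open, nonempty, and of positive $2$-dimensional measure; yet the actual orbit is $(t,2t,3t)$, and intersecting the three arc conditions $\norm{t}_{\RR/\ZZ}>\tfrac14$, $\norm{2t}_{\RR/\ZZ}>\tfrac14$, $\norm{3t}_{\RR/\ZZ}>\tfrac14$ over $t\in[0,1)$ gives the empty set, i.e.\ $T_1=\vn$. (This is exactly the extremal $1:2:3$ configuration the proposition exists to handle.) Any correct argument must be carried out on the one-dimensional periodic orbit itself, not on a $2$-dimensional coset.

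Second, even if one repaired that framework, the actual mathematical content of Proposition \ref{prop:extreme} --- that the three spectators at $1$, $i$, $-i$ cannot \emph{all} fail --- is never established in your write-up: you only announce ``a finite case-check on the parameter $c$'' and describe what you would do. That check is the entire theorem. For comparison, the paper's proof is elementary and uses the integrality of the velocities directly: each $T_k$ has period $1$, so it suffices to show $m(T_k)>0$ for some $k$; Theorem \ref{thm:3runnersslRC} gives $T_1\neq\vn$; and if $m(T_1)=0$, then every $t_0\in T_1$ forces runners $1$ and $3$ to sit exactly at the extreme positions $\pm i$ (otherwise an interval around $t_0$ would lie inside $T_1$), after which evaluating the configuration at the explicit times $t_0+\tfrac14$ and $t_0+\tfrac34$ produces an interval inside $T_i$ or $T_{-i}$. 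To fix your proposal you would need either to restrict the analysis to the periodic curve and actually perform the case analysis on the circle, or to adopt a direct time-shift argument of this kind.
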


\begin{proof}
    The sets $T_k$ are all periodic with period $1$, so the condition $m(T_k\cap[\al,\infty))>0$, $\forall \al\in\RR$, holds if and only if $m(T_k)>0$. It suffices to prove that at least one of the sets $T_k$, $k=1,i,-i$, contains an interval.

    On the other hand, we know by Theorem \ref{thm:3runnersslRC} that all $T_k$ are nonempty. Suppose that $m(T_1)=0$; then, for every $t\in T_1$, at least one of the runners must be at the position $i$ and another one at $-i$. Indeed, if $t_0\in T_1$ and no runner is at $i$ (or $-i$), then there is an interval containing $t_0$ which is a subset of $T_1$.

    Let $v_j(t)=jt+s_j$ the position of the $j$th runner. Suppose that at time $t_0$, the second runner is at $i$, i.e. $e^{2\pi iv_2(t_0)}=i$. Then, either $e^{2\pi iv_j(t_0)}=-i$, for $j=1$ or $3$. Therefore, at $t'=t_0+\frac{1}{2}$, both the second and the $j$th runner are at $i$, hence the remaining runner must be at $-i$. This argument shows in general that if $m(T_k)=0$, then for any $t\in T_k$, the first and the third runner occupy the ``extreme'' positions, which are exactly 1/4 distance from the spectator at $k$.

    So, continuing our assumption that $m(T_1)=0$, let $t_0\in T_1$ such that $e^{2\pi iv_1(t_0)}=i$ and $e^{2\pi iv_3(t_0)}=-i$ (if it was $e^{2\pi iv_1(t_0)}=-i$ and $e^{2\pi iv_3(t_0)}=i$, we would consider the moment $t_0+\frac{1}{2}$). Since the second runner also has distance at least $1/4$ from the spectator at $1$, $e^{2\pi iv_2(t_0)}$ would be in the left semicircle, i.~e. $\cos(2\pi v_2(t_0))\leq0$. At the moment $t_1=t_0+\frac{1}{4}$ it holds 
    \[e^{2\pi iv_1(t_1)}=e^{2\pi iv_3(t_1)}=-1.\]
    If $e^{2\pi iv_2(t_1)}\neq1$, then either $m(T_{-i})>0$ or $m(T_i)>0$, depending on whether $\sin(2\pi v_2(t_1))\geq0$ or $\sin(2\pi v_2(t_1))\leq0$. If $e^{2\pi iv_2(t_1)}=1$, then at the moment $t_2=t_1+\frac{1}{2}$ it holds
    \[e^{2\pi iv_1(t_2)}=e^{2\pi iv_2(t_2)}=e^{2\pi iv_3(t_2)}=1,\]
    therefore, $[t_2,t_2+\frac{1}{6}]\in T_{-i}$, completing the proof.
\end{proof}


\section{Proof of Theorem \ref{thm:main4pt}}\label{sec:4pt}

Suppose that $ \mathcal{G}(f, \Lambda)$ is a linearly dependent Gabor system, where $\Lambda= \{(\tau_k, \om_k) \}_{k=0} ^{3}$ and without loss of generality, by Corollary \ref{cor:wlog0inLambda}, we assume that $(\tau_0, \om_0)=(0,0)$ and $\omega_0 \leq \omega_1 \leq \omega_2 \leq \omega_3$. Lastly, denote $\Om=\set{\om_0,\om_1,\om_2,\om_3}$.

\subsection{\textbf{Case 1:}  \texorpdfstring{$\omega_0 < \omega_1 < \omega_2 < \omega_3$}{}}

The linear dependence implies that there are constants $c_1, c_2, c_3 \in \mathbb{C} \setminus \{ 0 \}$ such that
\begin{equation}\label{eq:lineardependence4points}
f(t)= \sum_{k=1}^{3}c_k e^{2\pi i \omega_k t}f(t-\tau_k),
\end{equation}
for almost all $t\in\RR$. If the affine dimension of $\Om$ is at least $2$, then we have a contradiction by Theorem \ref{thm:mainonedependence}. So, we may assume that the affine dimension of $\Om$ is $1$, which means that $\om_1,\om_2, \om_3$ are proportional to positive integers.

We denote by $\theta_k$ the arguments of the coefficients $c_k$, i.~e. $c_k=\abs{c_k}e^{2\pi i\theta_k}$.
Consider three runners with velocities $\om_1, \om_2, \om_3$,
with starting points on the unit circle $S^1$ of $\CC$, the numbers $e^{2\pi i \theta_k} = c_k / \abs{c_k}$, for $k=1,2,3$. Suppose that the spectator lies at the point $z=1$. The fact that from any starting position, the three runners eventually have (arc length) distance at least $\frac{1}{4}$ from $z=1$, by virtue of Theorem \ref{thm:3runnersslRC}, this means that we can find a set $T$ such that $T\cap[\al,\infty)\neq\vn$ for all $\al\in\RR$ and
\begin{equation}\label{eq:negativerealparts}
    \mathrm{Re}(e^{2\pi i( \theta_k + \omega_k t)})=\cos(2\pi( \omega_kt+\theta_k)) \leq 0, \ \ \forall t \in T, \ \ \forall k=1,2,3
\end{equation}

Now, let $\al\in\RR$ be such that $f(t-\tau_k)>0, \forall k\in\set{0,1,2,3}, \forall t\geq\al$. Then, taking real parts in \eqref{eq:lineardependence4points} we get
\begin{equation}\label{eq:tn}
f(t)= \sum_{k=1}^{3}\abs{c_k} \cos(2\pi( \omega_kt+\theta_k))f(t-\tau_k),
\end{equation}
for almost every $t\geq\al$. If $\om_1:\om_2:\om_3$ is not proportional to $1:2:3$, then $m(T\cap[\al,\infty))>0$, where $m$ is the usual Lebesgue measure, establishing a contradiction by Theorem \ref{thm:3runnersslRC}, since for almost all $t\in T\cap[\al,\infty)$ we have
\begin{equation}\label{eq:realpartsineq}
f(t)>0\geq\sum_{k=1}^{3}\abs{c_k} \cos(2\pi( \omega_kt+\theta_k))f(t-\tau_k).    
\end{equation}
It remains to consider the case where $\om_1:\om_2:\om_3$ is proportional to $1:2:3$. For this purpose, we invoke Proposition \ref{prop:extreme} and use the same notation for the sets $T_1$, $T_i$, $T_{-i}$.

If $m(T_1\cap[\al,\infty))>0$, $\forall \al\in\RR$, we establish a contradiction by taking real parts in \eqref{eq:lineardependence4points} and picking $\al$ such that $f(t-\tau_k)>0$, $\forall t\geq\al$, $k=0,1,2,3$. Again we get \eqref{eq:negativerealparts} for $T=T_1$ and \eqref{eq:tn} for almost all $t\geq\al$, contradicting \eqref{eq:realpartsineq}, which holds for almost all $t\in T_1\cap[\al,\infty)$.

If $m(T_i\cap[\al,\infty))>0$, $\forall \al\in\RR$, we establish a contradiction by taking imaginary parts in \eqref{eq:lineardependence4points} and picking $\al$ such that $f(t-\tau_k)>0$, $\forall t\geq\al$, $k=0,1,2,3$. We would have 
\begin{equation}\label{eq:negativeimaginaryparts}
    \mathrm{Im}(e^{2\pi i( \theta_k + \omega_k t)})=\sin(2\pi( \omega_kt+\theta_k)) < 0, \ \ \forall t \in T_i, \ \ \forall k=1,2,3.
\end{equation}
We would also have 
\begin{equation}\label{eq:negimcontr}
0= \sum_{k=1}^{3}\abs{c_k} \sin(2\pi( \omega_kt+\theta_k))f(t-\tau_k),
\end{equation}
for almost every $t\geq \al$. However, for every $t\in T_i\cap[\al,\infty)$ we have 
\begin{equation*}
0>\sum_{k=1}^{3}\abs{c_k} \sin(2\pi( \omega_kt+\theta_k))f(t-\tau_k),   
\end{equation*}
a contradiction.

The case $m(T_{-i}\cap[\al,\infty))>0$, $\forall\al\in\RR$ is similar to the previous one; the only difference is that 
\begin{equation}\label{eq:positiveimaginaryparts}
    \mathrm{Im}(e^{2\pi i( \theta_k + \omega_k t)})=\sin(2\pi( \omega_kt+\theta_k)) > 0, \ \ \forall t \in T_{-i}, \ \ \forall k=1,2,3.
\end{equation}

\subsection{\textbf{Case 2:}  \texorpdfstring{$\omega_0 = \omega_1 < \omega_2 < \omega_3$}{}}

The linear dependence relation becomes
\begin{equation}\label{eq:precase2}
f(t)+c_1f(t-\tau_1)=\abs{c_2}e^{2\pi i (\omega_2 t+ \theta_2)}f(t-\tau_2)+\abs{c_3}e^{2\pi i ( \omega _3 t + \theta_3)}f(t-\tau_3).  
\end{equation}
We will actually show something stronger in this case:
\begin{claim}\label{claim:halfline}
    There is no function $f:\RR\to\CC$ satisfying \eqref{eq:precase2} and $f(t-\tau_k)>0$, for all $k\in\set{0,1,2,3}$ and for almost all $t\in[\al,\infty)$.
\end{claim}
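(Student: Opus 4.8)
The plan is to test the identity \eqref{eq:precase2} against a single, well-chosen direction on the unit circle. Write $c_1=\abs{c_1}e^{2\pi i\theta_1}$ with $\theta_1\in(-\tfrac12,\tfrac12]$, and set $s=e^{\pi i\theta_1}$, the midpoint of the shorter arc joining $1$ and $c_1/\abs{c_1}$. For $w\in\CC$ let $\ell(w)=\mathrm{Re}(\bar s\,w)$ denote the signed projection onto $s$. The structural point I would exploit is that the left-hand side of \eqref{eq:precase2}, namely $f(t)\cdot 1+\abs{c_1}f(t-\tau_1)\cdot e^{2\pi i\theta_1}$, is a \emph{nonnegative} combination of $1$ and $e^{2\pi i\theta_1}$, since $f(t),f(t-\tau_1)>0$. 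Because $\ell(1)=\ell(e^{2\pi i\theta_1})=\cos(\pi\theta_1)\geq0$, this yields
\[
\ell\bigl(f(t)+c_1f(t-\tau_1)\bigr)=\cos(\pi\theta_1)\bigl(f(t)+\abs{c_1}f(t-\tau_1)\bigr)\geq0
\]
for almost every $t\geq\al$. This remains valid even when $\theta_1=\tfrac12$ (i.e. $c_1$ a negative real), where it degenerates to $\ell(\mathrm{LHS})=0$; this is precisely why the bisector choice lets me avoid a separate treatment of $c_1<0$.

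Next I would force the right-hand side strictly the other way. Its two terms are $\abs{c_k}f(t-\tau_k)e^{2\pi i(\om_kt+\theta_k)}$ for $k=2,3$ with $f(t-\tau_k)>0$, so $\ell(\mathrm{RHS})=\sum_{k=2,3}\abs{c_k}f(t-\tau_k)\cos\!\bigl(2\pi(\om_kt+\theta_k)-\pi\theta_1\bigr)$. I view $\om_2,\om_3$ as two runners with distinct positive velocities and starting points $\theta_2-\tfrac{\theta_1}{2},\theta_3-\tfrac{\theta_1}{2}$, with $s$ as spectator. By the shifted Lonely Runner Conjecture for $n=2$ (the easy case of Conjecture \ref{conj:sLRC}), there is a time $t^\ast$ at which both runners lie at distance $\geq\tfrac13$ from $s$; since $\tfrac13>\tfrac14$, both cosines above are $\leq-\tfrac12$, whence $\ell(\mathrm{RHS})\leq-\tfrac12\sum_{k=2,3}\abs{c_k}f(t^\ast-\tau_k)<0$. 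Combined with the previous paragraph this already contradicts \eqref{eq:precase2} at $t^\ast$, but only at one point, so a measure-theoretic upgrade is required.

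That upgrade is the main obstacle. The maps $t\mapsto\cos(2\pi(\om_kt+\theta_k)-\pi\theta_1)$ are continuous and $\leq-\tfrac12$ at $t^\ast$, so they stay below $-\tfrac14$ on an open interval $I\ni t^\ast$, on which $\ell(\mathrm{RHS})<0\leq\ell(\mathrm{LHS})$. To turn $I$ into a positive-measure subset of $[\al,\infty)$ I would split into two cases. If $\om_2/\om_3\in\QQ$ the two cosines are periodic with a common period $P$, so the favorable set contains $I+nP$ for all $n\in\NN$ and meets $[\al,\infty)$ in positive measure. If $\om_2/\om_3\notin\QQ$, then $\om_2,\om_3$ are linearly independent over $\QQ$ and Weyl equidistribution of $t\mapsto(\om_2t,\om_3t)$ on the torus gives the favorable set positive density (equivalently, in this case $\Om=\set{0,\om_2,\om_3}$ has affine dimension $2$, so Theorem \ref{thm:mainonedependence} already applies). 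Either way there is a positive-measure set of $t\geq\al$ on which \eqref{eq:precase2} holds, all $f(t-\tau_k)>0$, yet $\ell(\mathrm{LHS})\geq0>\ell(\mathrm{RHS})$, a contradiction.

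The decisive idea is the choice of spectator $s$ as the angular bisector of $1$ and $c_1/\abs{c_1}$: it forces the left-hand side into the closed half-plane $\set{\ell\geq0}$ for \emph{every} admissible $f$ and \emph{every} value of $c_1$, while leaving the genuinely oscillating frequencies $\om_2,\om_3$ free to be driven into the opposite open half-plane by the two-runner bound. I expect the only genuine work beyond this setup to be the passage from one favorable instant to a set of instants of positive measure in $[\al,\infty)$, which the periodicity/equidistribution dichotomy above resolves.
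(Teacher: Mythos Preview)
Your proof is correct and follows the paper's strategy---project onto a direction making the left-hand side nonnegative, then use the two-runner case of Conjecture~\ref{conj:sLRC} to force the right-hand side strictly negative on a set of positive measure in $[\al,\infty)$; the paper reduces to $\mathrm{Im}\,c_1\geq0$ by conjugation (legitimate since $f$ is real on the half-line) and takes imaginary parts with spectator at $i$, while your bisector $s=e^{\pi i\theta_1}$ achieves the same without a WLOG, and your rational/irrational split makes the positive-measure step more explicit than the paper's. One minor slip: the parenthetical appeal to Theorem~\ref{thm:mainonedependence} in the irrational case does not apply here, since the Claim is stated for arbitrary $f$ not assumed to lie in $L^2(\RR)$, but your Weyl equidistribution argument already covers that case on its own.
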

\begin{proof}[Proof of Claim]
Assume that the contrary holds for some function $f$;
taking imaginary parts on both sides of \eqref{eq:precase2}, we get
\begin{equation}\label{eq:case2}
\mathrm{Im}c_1\cdot f(t-\tau_1)=\abs{c_2}\sin (2\pi( \omega_2 t+ \theta_2))f(t-\tau_2)+  \abs{c_3}\sin (2 \pi ( \omega _3 t + \theta_3)) f (t-\tau_3),
\end{equation}
for almost all $t\in[\al,\infty)$.
Without loss of generality we assume that $\mathrm{Im}c_1\geq0$ and we can choose $t_0\geq\al$, such that 
$$\sin(2 \pi (\omega_k t_0+ \theta_k)) \leq -\frac{1}{2} , \;\;\; k=2,3,$$ 
by considering two runners on $S^1$ with velocities $\om_2, \om_3$, starting points at $e^{2\pi i(\om_k\al+\theta_k)}$, $k=2,3$, and the spectator at $z=i$ (we apply Conjecture \ref{conj:sLRC} for two runners). Therefore, the weaker inequalities $\sin(2 \pi (\omega_k t+ \theta_k))<0, k=2,3,$ hold for a subset of positive measure in $[t_0,\infty)$; this contradicts \eqref{eq:case2}. 
\end{proof}

This argument also applies in the cases where $\omega_0 < \omega_1 = \omega_2 < \omega_3$ or $\omega_0 < \omega_1 < \omega_2 = \omega_3$.

\subsection{\textbf{Case 3:} \texorpdfstring{$\omega_0 = \omega_1 = \omega_2 < \omega_3$}{}}

In this case, we have
$$f(t)+c_1f(t- \tau_1)+c_2f(t- \tau_2)=|c_3|e^{2\pi i ( \omega _3 t + \theta_3)}f(t-\tau_3).$$
Again, taking imaginary parts on both sides, we obtain
$$\mathrm{Im}c_1\cdot f(t-\tau_1)+ \mathrm{Im}c_2\cdot f(t-\tau_2)=\abs{c_3}\sin (2 \pi ( \omega _3 t + \theta_3)) f(t-\tau_3),$$
for almost all $t$ in a half line, say $[\al,\infty)$, such that $f(t-\tau_k)>0$ for all $k\in\set{0,1,2,3}$ almost all $t\geq\al$.
If $c_1'=\mathrm{Im}c_1, c_2'= \mathrm{Im}c_2$, then
\begin{equation}\label{eq:case3}
c_1'f(t-\tau_1)+ c_2'f(t-\tau_2)=| c_3 | \frac{e^{2\pi i ( \omega _3 t + \theta_3)}-e^{2\pi i (-\omega _3 t - \theta_3)}}{2i}f(t-\tau_3),    
\end{equation}
for almost all $t\in[\al,\infty)$, establishing a contradiction using Claim \ref{claim:halfline} above with $f$ and the set of time-frequency translates $\Lambda'=\{ (0, \tau_1), (0, \tau_2) , (\omega_3, \tau_3), (-\omega_3, \tau_3)\}$.

\smallskip
\noindent
The remaining cases, when all the frequencies are the same $\omega_0 = \omega_1 = \omega_2 = \omega_3$ so that $\Lambda$ forms a line \cite{HRT} or a (2,2) configuration \cite{22} $\omega_0 = \omega_1 < \omega_2 = \omega_3$ are known results for all nonzero $f\in L^2(\RR)$. However, we will include them here for the special case where $f$ is ultimately positive, for completion. It should be noted that these are the first cases where we use some decay conditions for $f$, implied by square-integrability, besides ultimate positivity.

\subsection{\textbf{Case 4:} \texorpdfstring{$\om_0=\om_1=\om_2=\om_3$}{}}\label{subsec:4}

It is known that the Fourier transform rotates the time-frequency plane by $\pi/2$; in other words, it is the unitary transformation $U_A$ described in Proposition \ref{prop:metaplectic} for $A(\tau,\om)=(-\om,\tau)$. Therefore, $\mc{G}(f,\La)$ is linearly dependent if and only if $\mc{G}(\hat{f},A(\La))$ is. The latter is equivalent to the existence of $c_0,\dotsc,c_3\in\CC$, not all zero, such that
\[\hat{f}(\om+\om_0)\sum_{j=0}^3 c_je^{2\pi i\tau_j\om}=0,\]
for almost all $\om\in\RR$, a contradiction, since the zero set of a (generalized) trigonometric polynomial has measure zero, and $\hat{f}$ cannot be zero on a set of full measure.

\subsection{\textbf{Case 5:} \texorpdfstring{$\omega_0 = \omega_1 < \omega_2 = \omega_3$}{}}\label{subsec:22}

Without loss of generality, we assume that $\tau_1>0$ and $\tau_3>\tau_2$. We rewrite the dependence relation as
\begin{equation}\label{eq:precase5}
f(t)+c_1f(t-\tau_1)=\abs{c_2}e^{2\pi i (\omega_2 t+ \theta_2)}f(t-\tau_2)+\abs{c_3}e^{2\pi i ( \omega _2 t + \theta_3)}f(t-\tau_3).  
\end{equation}
This case is similar to Case 2; we assume without loss of generality that $\mathrm{Im}c_1\geq0$ and then use the same method as in the proof of Claim \ref{claim:halfline} to arrive to the same conclusion as in Case 2, as long as the inequalities $\sin(2 \pi (\omega_2 t+ \theta_k))<0, k=2,3,$ hold for a subset of positive measure in $[t_0,\infty)$; the only way this does not happen, is if $\theta_2-\theta_3=\frac{2\ell+1}{2}$, for $\ell\in\ZZ$. Then,
\[\sin(2 \pi (\omega_2 t+ \theta_2))=-\sin(2 \pi (\omega_2 t+ \theta_3)).\]
If $\mathrm{Im}c_1>0,$ We divide \eqref{eq:precase5} by $e^{2\pi i(\om_2t+\theta_2)}$, obtaining
\begin{equation}\label{eq:case5imc1pos}
    e^{-2\pi i(\om_2t+\theta_2)}f(t)+c_1e^{-2\pi i(\om_2t+\theta_2)}f(t-\tau_1)=\abs{c_2}f(t-\tau_2)-\abs{c_3}f(t-\tau_3).
\end{equation}
Taking imaginary parts on both sides, we get
\[-\sin(2\pi(\om_2t+\theta_2))f(t)-\abs{c_1}\sin(2\pi(\om_2t+\theta_2-\theta_1))f(t-\tau_1)=0,\]
where $e^{2\pi i\theta_1}=c_1/\abs{c_1}$. Since $\mathrm{Im}c_1>0$, we may have $\abs{\theta_1}<\frac{1}{2}$, so that $\theta_2-(\theta_2-\theta_1)$ is not a half integer, therefore, there is a set of positive measure in $[t_0,\infty)$ such that both $\sin(2\pi(\om_2t+\theta_2))$ and $\sin(2\pi(\om_2t+\theta_2-\theta_1))$ are negative, contradicting \eqref{eq:case5imc1pos}.

We again establish a contradiction if $\mathrm{Im}c_1=0$; taking imaginary parts in \eqref{eq:precase5}, we obtain
\[\abs{c_2}f(t-\tau_2)-\abs{c_3}f(t-\tau_3)=0,\]
for almost all $t\geq t_0$, or equivalently,
\begin{equation}\label{eq:case5funceq1}
    f(t)=Cf(t-\tau), \;\;\;\text{ for almost all }t\geq t_0-\tau_2,
\end{equation}
where $\tau=\tau_3-\tau_2$ and $C=\frac{\abs{c_3}}{\abs{c_2}}$. Hence, \eqref{eq:precase5} becomes $f(t)+c_1f(t-\tau_1)=0$, for almost all $t\geq t_0$, or equivalently,
\begin{equation}\label{eq:case5funceq2}
    f(t)=cf(t-\tau_1), \;\;\;\text{ for almost all }t\geq t_0,
\end{equation}
where $c=-c_1$. Since $\tau,\tau_1>0$ and $f$ is ultimately positive, we must have $c, C>0$; moreover, since $f\in L^2(\RR)$, we must definitely have $c, C<1$. We distingusih two subcases:

\subsubsection{\texorpdfstring{$\boxed{\tau/\tau_1\in\QQ}$}{}}
 Suppose that $\tau/\tau_1=m/n$, where $m,n\in\NN$ and $\gcd(m,n)=1$. Put $\tau'=\frac{\tau}{m}=\frac{\tau_1}{n}$. Also, let $X$ be the zero measure subset of $[t_0,\infty)$ where either \eqref{eq:case5funceq1} or \eqref{eq:case5funceq2} fails. Then, the set $\tilde{X}=X+\ZZ\tau+\ZZ\tau_1$ is also of measure zero.  Let $t\geq t_0$, such that $t\notin\tilde{X}$. Then, the following equalities hold:
\[f(t)=c^{-m}f(t+m\tau_1)=c^{-m}f(t+n\tau)=c^{-m}C^{n}f(t),\]
hence $C^n=c^m$, or equivalently, $C^{\tau_1}=c^\tau$. Put $c'=\sqrt[m]{C}=\sqrt[n]{c}$ and consider $a,b\in\ZZ$ such that $am+bn=1$; we may take $a>0>b$. It holds $a\tau+b\tau_1=(a\frac{m}{n}+b)\tau_1=\frac{\tau_1}{n}=\tau'$. Then, the following equalities hold, for $t\geq t_0$, $t\notin\tilde{X}$:
\[f(t-\tau')=f(t-a\tau-b\tau_1)=C^{-a}f(t-b\tau_1)=c^{-b}C^{-a}f(t)=c'^{-am-bn}f(t)=\frac{1}{c'}f(t),\]
or equivalently, $f(t)=c'f(t-\tau')$, for almost all $t\geq t_0$. Hence, the function $g(t)=f(t)-c'f(t-\tau')$ is supported on a left half-line of $\RR$. Furthermore, $g\in L^2(\RR)$ and \eqref{eq:precase5} may be transformed to a linear dependence relation of time-frequency translates of $g$: indeed, as
\[f(t)+c_1f(t-\tau_1)=f(t)-c'^nf(t-n\tau')=\sum_{k=0}^{n-1} c'^kg(t-k\tau')=\sum_{k=0}^{n-1}c'^kT_{k\tau'}g(t),\]
and
\begin{align*}
\abs{c_2}e^{2\pi i(\om_2t+\theta_2)}(f(t-\tau_2)-Cf(t-\tau_3))&=\abs{c_2}e^{2\pi i(\om_2t+\theta_2)}(f(t-\tau_2)-c'^mf(t-\tau_2-m\tau'))\\
&=\abs{c_2}e^{2\pi i(\om_2t+\theta_2)}\sum_{j=0}^{m-1}c'^jg(t-\tau_2-j\tau')\\
&=\sum_{j=0}^{m-1}c_2c'^jM_{\om_2}T_{\tau_2+j\tau'}g(t),
\end{align*}
yielding
\[\sum_{k=0}^{n-1}c'^kT_{k\tau'}g(t)-\sum_{j=0}^{m-1}c_2c'^jM_{\om_2}T_{\tau_2+j\tau'}g(t)=0,\]
contradiction by Proposition \ref{prop:halfline}.

\subsubsection{\texorpdfstring{$\boxed{\tau/\tau_1\notin\QQ}$}{}}
We still have $C^{\tau_1}=c^\tau$, but the proof is different. Suppose on the contrary, that $C^{\tau_1}\neq c^\tau$, without loss of generality $C^{\tau_1}<c^\tau$. Since $0<c, C<1$, there is some $\de>0$ such that $c^{\tau+\de}=C^{\tau_1}$. Next, take $\eps>0$ such that $\eps<\de\log_Cc$. Since $\tau/\tau_1\notin\QQ$ and $\tau,\tau_1>0$, there are $m,n\in\NN$ such that
\[0<m\tau_1-n\tau<\eps.\]
By \eqref{eq:case5funceq1} and \eqref{eq:case5funceq2} we get
\begin{equation}\label{eq:case5funceq3}
    f(t+m\tau_1-n\tau)=C^{-n}c^mf(t), \;\;\;\text{ for almost all }t\geq t_0.
\end{equation}
We will establish a contradiction by showing that $C^{-n}c^m>1$; such a functional equation cannot be satisfied by an ultimately positive function in $L^2(\RR)$. Indeed,
\begin{align*}
    c^m=(c^\tau)^{m/\tau}=(c^{\tau+\de})^{m/\tau}c^{-\de\frac{m}{\tau}}=(C^{\tau_1})^{m/\tau}c^{-\de\frac{m}{\tau}}>(C^{\tau_1})^{\frac{m}{\tau}+\frac{\eps}{\tau\tau_1}}c^{-\de\frac{m}{\tau}}=C^nC^{\eps/\tau}c^{-\de\frac{m}{\tau}},
\end{align*}
and $C^{-n}c^m>1$ follows from the fact that $C^\eps>c^{\de m}$, which is equivalent to 
\[m>\frac{\eps}{\de}\log_cC.\]
The latter inequality holds, since the right-hand side is less than one, due to $\eps<\de\log_Cc$.

Hence, $C^{\tau_1}=c^\tau$. Put $C_0=C^{1/\tau}=c^{1/\tau_1}$, so that \eqref{eq:case5funceq1} and \eqref{eq:case5funceq2} become
\begin{equation}\label{eq:case5funceq1alt}
        f(t)=C_0^\tau f(t-\tau), \;\;\;\text{ for almost all }t\geq t_0-\tau_2
\end{equation}
and
\begin{equation}\label{eq:case5funceq2alt}
    f(t)=C_0^{\tau_1}f(t-\tau_1), \;\;\;\text{ for almost all }t\geq t_0.
\end{equation}
As before, let $X$ be the zero measure subset of $[t_0,\infty)$ where either \eqref{eq:case5funceq1} or \eqref{eq:case5funceq2} fails. Then, the set $\tilde{X}=X+\ZZ\tau+\ZZ\tau_1$ is also of measure zero. So, if $x\in[t_0,\infty)\sm \tilde{X}$, then for any $t\in (x+\ZZ\tau+\ZZ\tau_1)\cap[t_0,\infty)$ it holds
\[f(t)=f(x)C_0^{t-x}=K_xC_0^t,\]
where $K_x=f(x)C_0^{-x}$. If $f$ were continuous, then this would also be the formula for $f$ for all $t\geq t_0$, since $x+\ZZ\tau+\ZZ\tau_1$ is dense, i.e. $K_x$ is a constant independent of $x$. However, square integrability of $f$ yields an equally good fact, namely that $K_x$ is constant almost everywhere. This follows from Khinchin's Theorem \cite{Khinchin} which states that for $F\in L^1(\TT)$ and $\al\notin\QQ$ we have
\[\lim_{N\to\infty}\frac1N\sum_{n=1}^NF(x+n\al)=\int_0^1F(x)\di x, \;\;\;\text{ for almost all }x\in\TT.\]
We apply this Theorem with $F(x)=\abs{f(t_0+\tau_1x)}^2$. We have
\[\int_0^1F(x)\di x=\frac{1}{\tau_1}\int_{t_0}^{t_0+\tau_1}\abs{f(u)}^2\di u,\]
and the above equals for almost all $x\in[0,1]$ to
\begin{align*}
    \lim_{N\to\infty}\frac1N\sum_{n=1}^NF\bra{x+n\frac{\tau}{\tau_1}}=\lim_{N\to\infty}\frac1N\sum_{n=1}^N\abs{f(y+n\tau-m_n\tau_1)}^2,
\end{align*}
where $y=t_0+\tau_1x$ and $m_n$ is the unique integer such that $y+n\tau-m_n\tau_1\in[t_0,t_0+\tau_1)$. The right-hand side equals
\[\lim_{N\to\infty}\frac{K_y}N\sum_{n=1}^NC_0^{2(y+n\tau-m_n\tau_1)}=\frac{K_y}{\tau_1}\int_{t_0}^{t_0+\tau_1}C_0^{2u}\di u,\]
by equidistribution of the sequence $\set{y+n\tau-m_n\tau_1}_{n\in\NN}$ in the interval $[t_0,t_0+\tau_1]$ and continuity of $C_0^{2u}$. Therefore, by Khinchin's Theorem \cite{Khinchin}, there is a constant $K$ such that $K_y=K$ for almost all $y\in[t_0,t_0+\tau_1]$, which implies that 
\begin{equation}\label{eq:case5formula}
f(t)=K\cdot C_0^t, \;\;\;\text{ for almost all }t\geq t_0.    
\end{equation}

This formula cannot be valid for almost all $t\in\RR$, since $f\in L^2(\RR)$. Let $\al\in\RR$ be the infimum of all $t_0$ such that \eqref{eq:case5formula} holds. As before, let $X$ be the set of $t\in\RR$ where either \eqref{eq:precase5} fails or $t\geq\al$ and \eqref{eq:case5formula} fails. This set has measure zero, and so does the set $X'=X+\ZZ\tau_1+\ZZ\tau_2+\ZZ\tau_3$. Now, suppose that $\tau_3>\tau_1$ and let $t,t-\tau_1,t-\tau_2\geq\al$ and $t\notin X'$, so that $t-\tau_1,t-\tau_2\notin X'$ as well, and $t-\tau_3<\al$; there is actually an interval, say $I$, such that for almost all $t\in I$ these conditions are satisfied and $I-\tau_3=(\al-\eps,\al)$, for some $\eps>0$. Then, by \eqref{eq:precase5} we get that for almost all $t\in I$ we must also have $f(t-\tau_3)=K\cdot C_0^{t-\tau_3}$, contradicting the definition of $\al$. We arrive to the same conclusion if $\tau_1>\tau_3$.

If $\tau_1=\tau_3$, we solve \eqref{eq:precase5} for $f(t-\tau_1)$:
\begin{equation}\label{eq:case5solvefort1}
f(t-\tau_1)=C_0^{-\tau_1}\frac{f(t)-c_2e^{2\pi i\om_2t}f(t-\tau_2)}{1-c_2e^{2\pi i\om_2t}C_0^{-\tau_2}}.    
\end{equation}
Again, consider an interval $I$, such that for almost all $t\in I$ we have $t, t-\tau_2\geq\al$, $t, t-\tau_2\notin X'$ and $I-\tau_1=(\al-\eps,\al)$, for some $\eps>0$. Then, by \eqref{eq:case5solvefort1} for almost all $t\in I$ we have
\[f(t-\tau_1)=f(t)C_0^{-\tau_1}=K\cdot C_0^{t-\tau_1},\]
contradicting again the definition of $\al$. This concludes the proof.




\bibliographystyle{amsplain}
\bibliography{bibliography}

\end{document}